\crefname{equation}{}{}
\crefname{item}{}{}
\definecolor{blauw}{RGB}{61,158,255}
\definecolor{donkerblauw}{RGB}{0,0,255}
\definecolor{donkergroen}{RGB}{46,148,0}
\definecolor{donkerrood}{RGB}{204,0,0}
\definecolor{donkergroen2}{RGB}{0,95,0}
\newcommand{\N}{\mathbb{N}}
\newcommand{\Z}{\mathbb{Z}}
\newcommand{\R}{\mathbb{R}}
\newcommand{\Q}{\mathbb{Q}}
\newtheorem{theorem}{Theorem}
\newtheorem{lemma}[theorem]{Lemma}
\newtheorem{proposition}[theorem]{Proposition}
\newtheorem{corollary}[theorem]{Corollary}
\theoremstyle{definition}
\newtheorem{definition}[theorem]{Definition}
\newtheorem{assumption}[theorem]{Assumption}
\newtheorem{exmp}{Example}
\newtheorem{remark}[theorem]{Remark}
\newcommand{\cx}{[ \mathbf x]}
\DeclareMathOperator{\poly}{poly}
\newcommand{\eps}{\varepsilon}
\newcommand{\err}{\mathcal E}
\DeclareMathOperator{\rank}{rank}
\newcommand{\momt}{\mathrm{mom}(f)_t}
\newcommand{\sost}{\mathrm{sos}(f)_t}
\newcommand{\Sgh}{S(\mathbf{g}, \mathbf{h})}
\DeclareMathOperator{\vol}{vol}
\DeclareMathOperator{\dist}{dist}
\newcommand{\mg}{\mathbf{g}}
\newcommand{\mh}{\mathbf{h}}
\newcommand{\mx}{\mathbf{x}}
\author{Sander Gribling}
\email{gribling@irif.fr}
\affiliation{%
  \institution{IRIF, Université Paris Cit\'e}
  \city{Paris}
  \country{France}
}
\author{Sven Polak}
\email{s.c.polak@tilburguniversity.edu}
\affiliation{%
  \institution{Tilburg University}
  \city{Tilburg}
  \country{The Netherlands}
}
\author{Lucas Slot}
\email{lucas.slot@inf.ethz.ch}
\affiliation{%
  \institution{ETH Z\"urich}
  \city{Z\"urich}
  \country{Switzerland}
}
\title[A note on the computational complexity of the moment-SOS hierarchy]{A note on the computational complexity of the \\moment-SOS hierarchy for polynomial optimization}
\begin{abstract} 
The moment-sum-of-squares (moment-SOS) hierarchy is one of the most celebrated and widely applied methods for approximating the minimum of an $n$-variate polynomial over a feasible region defined by polynomial (in)equalities. A key feature of the hierarchy is that, at a fixed level, it can be formulated as a semidefinite program of size polynomial in the number of variables $n$. Although this suggests that it may therefore be computed in polynomial time, this is not necessarily the case. Indeed, as O'Donnell~\cite{ODonnell:bitcomplexity} and later Raghavendra \& Weitz~\cite{RaghavendraWeitz:bitcomplexity} show, there exist examples where the sos-representations used in the hierarchy have exponential bit-complexity. We study the computational complexity of the moment-SOS hierarchy, complementing and expanding upon earlier work of Raghavendra \& Weitz~\cite{RaghavendraWeitz:bitcomplexity}. In particular, we establish algebraic and geometric conditions under which polynomial-time computation is guaranteed to be possible. 
\end{abstract}
\keywords{moment-SOS hierarchy, sums of squares, moments, computational complexity, polynomial optimization, semidefinite programming}
\begin{document}
\sloppy

\maketitle

\section{Introduction}
Consider the polynomial optimization problem:
\begin{equation}
\begin{split} \label{EQ:POP}
      f_{\min} := \min ~& f(\mx) \\
      \mathrm{s.t. }~&g_i(\mx) \geq 0 \quad(1 \leq i \leq m), \\
      &h_j(\mx) = 0 \quad(1 \leq j \leq \ell), \\
      &\mx \in \R^n,
\end{split}  \tag{POP} 
\end{equation}
where $f, g_i, h_j \in \R[\mx]$ are given $n$-variate polynomials.
The feasible region of \cref{EQ:POP} is a basic \emph{semialgebraic} set, which we denote by:
\[
    S(\mathbf{g}, \mathbf{h}) := \{ \mx \in \R^n : g_i(\mx) \geq 0,~
      h_j(\mx) = 0\}.
\]
Problems of the form \cref{EQ:POP} are generally hard and non-convex. They naturally capture several classical combinatorial problems, and have applications in finance, energy optimization, machine learning, optimal control and quantum computing. As they are often intractable, several techniques have been proposed to approximate them. Perhaps the most well-known and studied among these techniques is the so-called \emph{moment-SOS hierarchy}, due to Lasserre~\cite{Lasserre:seminal} and Parillo~\cite{Parillo:sos}. The main idea behind the hierarchy is that one 
can certify the nonnegativity of a polynomial $p \in \R[\mx]$ on $S(\mathbf{g}, \mathbf{h})$ by representing it as a \emph{weighted sum of squares}:
\begin{equation} \label{EQ:SOSRepresentation}
        p(\mx) = \sum_{i=0}^m g_i(\mx) \sigma_i(\mx) + \sum_{j=1}^\ell h_j(\mx) p_j(\mx),
\end{equation}
where $\sigma_i \in \Sigma[\mx]$ are sums of squares, $p_j \in \R[\mx]$ and we set ${g_0(\mx) = 1}$ for convenience. We say that a representation~\cref{EQ:SOSRepresentation} is of degree $t$ if $\mathrm{deg}(g_i\sigma_i) \leq t$ and $\mathrm{deg}(h_jp_j) \leq t$ for all $i,j$.
For $t \in \N$, one then obtains a lower bound $\mathrm{sos}(f)_t \leq f_{\min}$ on the minimum of $f$ by:
\begin{equation}\label{EQ:introductionSOS}  \tag{SOS}
\begin{split}
\mathrm{sos}(f)_t := \sup_{\lambda \in \R} \big\{\lambda :\, & 
    f - \lambda~\text{has a representation~\cref{EQ:SOSRepresentation}}
    \\[-5pt]&\text{ of degree } 2t \big\}.
\end{split}
\end{equation}
For fixed level~$t$, this lower bound may be computed by solving a semidefinite program (SDP) involving matrices of size polynomial in $n$.
It is often claimed that one may therefore (approximately) compute $\mathrm{sos}(f)_t$ in polynomial time, for instance by applying the ellipsoid algorithm. As was noted by O'Donnell~\cite{ODonnell:bitcomplexity} and later by Raghavendra \& Weitz~\cite{RaghavendraWeitz:bitcomplexity}, this is not necessarily the case. Indeed, polynomial runtime of the ellipsoid algorithm is only guaranteed when the feasible region of the SDP contains an \emph{inner ball} which is not too small, and is contained in an \emph{outer ball} which is not too large. Informally, these two balls ensure 
that it is possible to choose the coefficients of the multipliers $\sigma_i, p_j$ in the representation~\cref{EQ:SOSRepresentation} so that their \emph{bit-complexity} is polynomial in $n$. We call such a representation \emph{compact}. 
Roughly speaking,
the following mild algebraic boundedness 
assumption on the feasible region $S(\mg, \mh)$ guarantees the existence of the inner ball for the SDP-formulation of $\sost$.
\begin{definition}
We say that the problem \cref{EQ:POP} is \emph{explicitly  bounded} 
if $g_1(\mx) = R^2 - \| \mx \|_2^2$ for some $R \geq 0$. 
\end{definition}
\begin{remark}
If \cref{EQ:POP} is explicitly bounded, then \cref{EQ:introductionSOS} is feasible for all $t \geq \lceil \deg(f)/2\rceil$. 
\end{remark}
\begin{remark}
Explicit boundedness is a slightly stronger assumption than the usual Archimedean condition, which merely requires ${R^2 - \| \mx \|_2^2}$ to have a representation of the form \cref{EQ:SOSRepresentation}. For example, the semialgebraic set defined by $x^2(1-x^2) \geq 0$ is Archimedean\footnote{{Technically, both the Archimedean and explicit boundedness condition are properties of the description of a semialgebraic set and not of the set itself. However since we always fix the description, we often write `the set $S(\bm g,\bm h)$ is explicitly bounded'.}} since we have the identity $1-x^2 = (1-x^2)^2 + x^2(1-x^2)$, but it clearly does not have a constraint of the form $R^2 - x^2$ in its description. Likewise, we point out that a compact semialgebraic set is not necessarily Archimedean (see, e.g.~\cite[Ex.~6.3.1]{Prestel:polopt},~\cite[Ex.~3.19]{Laurent:polopt}).  \end{remark}

The remaining question, then, is whether an outer ball always exists. O'Donnell~\cite{ODonnell:bitcomplexity} shows that in fact, it does not; he constructs an example where every representation \cref{EQ:SOSRepresentation} of $f(\mx) - \mathrm{sos}(f)_2$ necessarily involves multipliers $\sigma_i, h_j$ whose coefficients are doubly-exponentially large in~$n$. Raghavendra \& Weitz~\cite{RaghavendraWeitz:bitcomplexity} subsequently show that it is possible to construct such an example even when the equalities $\mh$ include the boolean constraints ${\mx_i - \mx_i^2 = 0}$, negatively answering a question posed by O'Donnell~\cite{ODonnell:bitcomplexity}. On the positive side, they show conditions under which existence of a compact representation \cref{EQ:SOSRepresentation} is guaranteed. These conditions are met for the reformulation of several well-known combinatorial problems as a \cref{EQ:POP}, as well as for optimization over the unit hypersphere. 
To state our and their results, we make the natural assumption that the coefficients of the objective function $f$ and the polynomials $g_i, h_j$ defining the feasible region $S(\mg, \mh)$ of~\cref{EQ:POP} have polynomial bit-complexity.
\begin{assumption}
Throughout, we assume that the coefficients of the polynomials $f, g_i, h_j$ in \cref{EQ:POP} have polynomial bit-complexity in $n$ and that their degree is independent of~$n$. We also assume that the number of constraints ($m+\ell$) is polynomial in $n$.
\end{assumption}

\begin{theorem}[Main positive result of \cite{RaghavendraWeitz:bitcomplexity}, paraphrased] \label{THM:RW}
Let $S(\mathbf{g}, \mathbf{h})$ be a semialgebraic set and let $t \geq \lceil \deg(f)/2\rceil$ be fixed. Assume that $S(\mathbf{g}, \mathbf{h})$ is explicitly bounded: $g_1(\mx) = R^2 - \sum_{i=1}^n \mx_i^2$ for some $1 \leq R \leq 2^{\mathrm{poly}(n)}$. Suppose furthermore that the following conditions are satisfied:
\begin{enumerate}
\item \label{cond:1} For any $p \in \R[\mx]_{t}$ 
with $p(\mx) = 0$ for all $\mx \in S(\mathbf{g}, \mathbf{h})$, there are $p_1, p_2, \ldots, p_\ell\in\R[\mx]$ such that:
\[
	p(\mx) = \sum_{j=1}^\ell p_j(\mx) h_j(\mx),
\]
and $\mathrm{deg}(p_jh_j) = O(t)$.
\item \label{cond:2} Let $\mu$ be the uniform probability measure on $S(\mathbf{g}, \mathbf{h})$. The \emph{moment matrix} $M_t(\mu)$ defined by:
\[
	M_t(\mu)_{\alpha, \beta} := \int_{S(\mathbf{g}, \mathbf{h})} \mx^{\alpha+\beta} d\mu(\mx) \quad(\alpha, \beta \in \N^n_{t})
\]
has smallest non-zero eigenvalue $\geq 2^{-\mathrm{poly}(n)}$.
\item \label{cond:3} There exists an $\eta \geq 2^{-\mathrm{poly}(n)}$ such that $g_i(\mx) \geq \eta$ for all $\mx \in S(\mathbf{g}, \mathbf{h})$ and $1 \leq i \leq m$.
\end{enumerate}
Then the program~\cref{EQ:introductionSOS} has an (approximately) optimal solution involving only multipliers $\sigma_i, p_j$ whose coefficients are at most $2^{\mathrm{poly}(n)}$.
\end{theorem}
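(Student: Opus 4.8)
The plan is to run the ellipsoid method on the semidefinite program (SDP) that defines $\sost$ and show it terminates in polynomial time with an $\eps$-optimal solution of bit-complexity $\mathrm{poly}(n,\log(1/\eps))$; taking $\eps = 2^{-\mathrm{poly}(n)}$ then gives the theorem. For this it suffices to exhibit, for the feasible region of (a suitable reformulation of) this SDP, an \emph{inner ball} of radius $2^{-\mathrm{poly}(n)}$ and an \emph{outer ball} of radius $2^{\mathrm{poly}(n)}$. The inner ball is the easy half: as noted before the theorem, explicit boundedness, $g_1 = R^2 - \|\mx\|_2^2$ with $R\le 2^{\mathrm{poly}(n)}$, already makes $\sost$ strictly feasible, and tracking $R$ together with the bit-complexity of $f$ makes the strict-feasibility margin $2^{-\mathrm{poly}(n)}$. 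The substance of the proof is the outer ball, which I would obtain by dualizing.

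Let $\momt$ be the degree-$\Theta(t)$ moment relaxation dual to the SOS-SDP, optimizing $\inf_L L(f)$ over truncated linear functionals $L$ on $\R[\mx]$ with $L(1)=1$, $M_t(L)\succeq 0$, localizing constraints $M_{t-d_i}(g_iL)\succeq 0$ (with $d_i := \lceil\deg g_i/2\rceil = O(1)$), and ideal constraints $L(h_jq)=0$ for $\deg(h_jq)\le\Theta(t)$. Two facts are needed. (i) The feasible region of $\momt$ lies in a ball of radius $2^{\mathrm{poly}(n)}$: combining the localizing constraint for $g_1$ with $M_t(L)\succeq 0$ yields the standard bound $|L(\mx^\alpha)|\le R^{|\alpha|}\le 2^{\mathrm{poly}(n)}$ for every monomial of degree $\le\Theta(t)$, whence $|L(f)|\le 2^{\mathrm{poly}(n)}$. (ii) $\momt$ is strictly feasible relative to its affine hull, with margin $\gamma = 2^{-\mathrm{poly}(n)}$ --- this is where the three conditions are used, and I will return to it. Granting (i) and (ii), conic (Slater-type) duality gives $\sost = \momt$ with the SOS-optimum attained, and --- after neutralizing the recession directions of the SOS-SDP, along which the trace of a Gram matrix can be inflated for free by adding multiples of $h_j^2$ --- produces a near-optimal certificate $f-\lambda = \sum_i g_i\sigma_i + \sum_j h_j p_j$ with all data of size $2^{\mathrm{poly}(n)}$. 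Concretely, one first compresses each $\mathrm{Gram}(\sigma_i)$ to the orthogonal complement of the degree-truncated vanishing ideal, absorbing the correction into the $p_j$ via Condition~\ref{cond:1}; pairing the compressed certificate against the strictly feasible $\bar L$ of (ii) then gives $\sum_i \mathrm{tr}(\mathrm{Gram}(\sigma_i)) \le \gamma^{-1}(\bar L(f)-\lambda) \le 2^{\mathrm{poly}(n)}$, so all $\sigma_i$ are bounded; and the $p_j$ are bounded in turn because $\sum_j h_j p_j = f-\lambda-\sum_i g_i\sigma_i$ has bounded coefficients and the $h_j$ have rational coefficients of polynomial bit-complexity, so the minimal-norm solution of this linear system is $2^{\mathrm{poly}(n)}$. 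Imposing the corresponding linear and trace bounds on the SOS-SDP as explicit constraints then gives the outer ball without changing the optimum.

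It remains to establish (ii), the strict feasibility of $\momt$, which I expect to be the main obstacle. The natural candidate is $\bar L = L_\mu\colon p\mapsto\int_{\Sgh}p\,d\mu$ for $\mu$ the uniform probability measure on $\Sgh$; it is feasible since $\mu$ is a probability measure on $\Sgh$. That $L_\mu$ is a relative interior point with margin $2^{-\mathrm{poly}(n)}$ is where the conditions combine. Condition~\ref{cond:1} ensures that a polynomial of degree $\le t$ vanishing on $\Sgh$ lies in the span of $\{h_jq : \deg(h_jq)\le\Theta(t)\}$, so the imposed ideal constraints force $\ker M_t(\mu)\subseteq\ker M_t(L)$ for \emph{every} feasible $L$; hence every feasible perturbation of $L_\mu$ acts on the moment matrix by a symmetric matrix vanishing on $\ker M_t(\mu)$. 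Condition~\ref{cond:2} gives that $M_t(\mu)$, and the finitely many lower-level moment matrices $M_{t-d_i}(\mu)$, have smallest nonzero eigenvalue $\ge 2^{-\mathrm{poly}(n)}$. Condition~\ref{cond:3} gives $M_{t-d_i}(g_iL_\mu) \succeq \eta\, M_{t-d_i}(\mu)$ and $\ker M_{t-d_i}(g_i\mu) = \ker M_{t-d_i}(\mu)$ (because $g_i\ge\eta>0$ on $\Sgh$), so the localizing matrices likewise have smallest nonzero eigenvalue $\ge\eta\cdot 2^{-\mathrm{poly}(n)}$ and their kernels are respected by feasible perturbations. Consequently a feasible perturbation of $L_\mu$ of norm $2^{-\mathrm{poly}(n)}$ moves $M_t(L)$ and each $M_{t-d_i}(g_iL)$, within the span of the non-degenerate subspace, by less than the least nonzero eigenvalue, preserving positive semidefiniteness.

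The genuinely delicate points, on which I expect to spend the most effort, are twofold. First, Condition~\ref{cond:1} only yields ideal multipliers of degree $O(t)$, so the hierarchy must be run at a commensurately larger (still $O(1)$) level $\Theta(t)$, and all degree bookkeeping --- in the kernel-matching argument above and in the compression step producing a compact certificate --- must be made to close up; this is exactly why all three conditions, and not merely Conditions~\ref{cond:2} and~\ref{cond:3}, are required: without Condition~\ref{cond:1} the affine hull of $\momt$ is strictly larger than the span of moment sequences on $\Sgh$, $L_\mu$ fails to be relatively interior, and the compression step breaks. Second, one must transfer the eigenvalue lower bound of Condition~\ref{cond:2} from $M_t(\mu)$ both to the lower-level moment matrices and, via Condition~\ref{cond:3}, to the localizing matrices $M_{t-d_i}(g_i\mu)$, while keeping the projections onto the various non-degenerate subspaces mutually compatible.
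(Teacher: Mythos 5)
The paper does not actually prove \cref{THM:RW}: it is imported verbatim (in paraphrased form) from Raghavendra--Weitz, so there is no in-paper proof to compare against line by line. That said, your proposal follows essentially the same route as the original argument and as the paper's own proofs of its analogous results: the certificate-compression step (pair the decomposition against a well-conditioned feasible functional $\bar L = L_\mu$, bound $\sum_i \operatorname{tr}(\mathrm{Gram}(\sigma_i))$ by $\eta^{-1}\lambda_{\min}^{-1}\bar L(f-\lambda)$ after projecting onto the complement of the kernel, absorb kernel directions into the ideal part via Condition~\ref{cond:1}, and bound the $p_j$ by Cramer's rule on the resulting linear system) is exactly the mechanism of \cref{THM:CONNECTION-formal}, and your strict-feasibility analysis of the moment SDP mirrors \cref{prop:L0 standard form} combined with \cref{LEM:Lupbound}. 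Two caveats. First, your opening assignment of roles is backwards relative to how the argument actually runs: explicit boundedness does not make the SOS-SDP strictly feasible (with equality constraints it generically is not); what it gives is the bound $|L(\mx^\alpha)|\le R^{|\alpha|}$ of \cref{LEM:Lupbound}, i.e.\ the \emph{outer} ball for the moment SDP, while the \emph{inner} ball comes from $L_\mu$ via Conditions~\ref{cond:1}--\ref{cond:3} --- your own later development gets this right, so it is a framing slip rather than a mathematical error. Second, the transfer of the eigenvalue bound from $M_t(\mu)$ to the lower-order moment matrices $M_{t-d_i}(\mu)$ is a genuine gap, not merely a delicate point: the least nonzero eigenvalue of a principal submatrix of a PSD matrix is in general \emph{not} bounded below by that of the full matrix, so some additional argument (or a strengthening of Condition~\ref{cond:2} to all relevant orders, which is how the paper states the hypothesis in \cref{PROP:main}) is needed there; you flag this but do not close it.
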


The conditions of \cref{THM:RW} have a natural interpretation in the dual formulation of~\cref{EQ:introductionSOS}, the \emph{moment formulation}, which  reads (see, e.g., \cite{deKlerkLaurent:survey}):
\begin{equation} \label{EQ:introductionMOM} \tag{MOM}
\begin{split}
\mathrm{mom}(f)_t := \inf_{L \in \R[\mx]_{2t}^*} \big\{L(f):\,& L(1)=1, L(p) \geq 0 \text{ for } p 
\notag \\[-5pt]& \text{with a representation~\cref{EQ:SOSRepresentation}}\notag\\&\text{of degree } 2t \big\}.
\end{split}
\end{equation}
We refer to \cref{sec:L} for a detailed discussion of \cref{EQ:introductionMOM} and the duality between it and \cref{EQ:introductionSOS}. For now let us introduce two types of matrices associated to linear functionals $L \in \R[\mx]_{2t}^*$: the \emph{moment matrix} $M_t(L)$ and, for a polynomial $g$, the localizing matrix $M_t(gL)$. Their entries are given by 
\begin{alignat}{2}
    \label{EQ:LM2}
    M_t(L)_{\alpha, \beta}       &= L(\mx^{\alpha + \beta}) \quad&&(|\alpha|,|\beta| \leq t), \\ 
    \label{EQ:LMg2}
    M_t(gL)_{\alpha, \beta}  &= L(g(\mx) \mx^{\alpha+\beta}) \quad&&(|\alpha|,|\beta| \leq t -\lceil\deg(g)/2\rceil). 
\end{alignat}
For ease of notation, we use a subscript $t$ for the localizing matrix $M_t(gL)$ even though it is indexed by monomials of degree at most $t -\lceil\deg(g)/2\rceil$.
Here we use terminology associated to measures since the linear functionals $L$ in \cref{EQ:introductionMOM} can be viewed as relaxations of probability measures: any probability measure $\mu$ whose support is contained in $\Sgh$ gives rise to a feasible $L_{\mu}$ defined by 
\begin{equation}\label{EQ:measureoperator}
L_{\mu}(p) = \int_{\Sgh} p(\mx) d\mu(\mx) \text{ for } p \in \R[\mx]_{2t}.
\end{equation}
We can then interpret the conditions in \cref{THM:RW} as follows: \eqref{cond:2}~says that $M_t(L_\mu)$ has smallest non-zero eigenvalue $\geq 2^{-\poly(n)}$ and~\eqref{cond:3} implies that the same holds for all localizing matrices $M_t(gL_\mu)$ with $g \in \mg$.

\subsection{Our contributions}
The main goal of this paper is to carefully map under what circumstances computation of the bounds $\mathrm{sos}(f)_t$ (and/or $\momt$) and the corresponding representation~\cref{EQ:introductionSOS} is guaranteed to be possible in polynomial time. 

Our starting point will be the following proposition which concerns the bounds $\momt$. The proof of this proposition consists of a straightforward reformulation of the conditions of strict feasibility for the SDP formulation of $\momt$ for explicitly bounded polynomial optimization problems, see \cref{sec:SDPformulation}. As we will see there, the conditions (1) and (2) in \cref{PROP:main} below will guarantee the existence of a ball in the feasible region of \eqref{EQ:LMOM}, whose radius depends on the smallest non-zero eigenvalue of the (localizing) moment matrices corresponding to a feasible solution $L$.
\begin{proposition} \label{PROP:main}
Let $S(\mathbf{g}, \mathbf{h})$ be a semialgebraic set and let $t \geq \lceil \deg(f)/2\rceil$ be fixed. Assume that $S(\mathbf{g}, \mathbf{h})$ is explicitly bounded: $g_1(\mx) = R^2 - \sum_{i=1}^n \mx_i^2$ for some $1 \leq R \leq 2^{\mathrm{poly}(n)}$. Suppose furthermore that there exists an $L \in \R[\mx]_{2t}^*$ with $L(1)=1$ and the following properties:
\begin{enumerate}
    \item For any $g \in \mathbf g$ and any  $p \in \R\cx_{t-\lceil \deg(g)/2\rceil}$, if $L(g p^2) =0$, then there are $p_1, p_2, \ldots, p_\ell\in\R[\mx]$ such that:
\[
	g p^2(\mx) = \sum_{j=1}^\ell p_j(\mx) h_j(\mx),
\]
and $\deg(p_j) \leq 2t-\deg(h_j)$ for each $j \in [\ell]$. We recall that $1 \in \mg$ by convention.
\item The matrices $M_t(L)$ and $M_t(gL)$ ($g \in \mathbf g$) have smallest non-zero eigenvalue at least $2^{-\poly(n)}$. 
\end{enumerate}
Then for $\varepsilon \geq 2^{-\mathrm{poly}(n)}$, the bound $\mathrm{mom}(f)_t$ (which equals $\mathrm{sos}(f)_t$) may be computed in polynomial time in $n$ up to an additive error of at most $\varepsilon$.
\end{proposition}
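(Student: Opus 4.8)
The plan is to reduce the statement to the standard fact that the ellipsoid method solves a semidefinite program to within additive error $\varepsilon$ in time polynomial in the input bit-size, $\log(1/\varepsilon)$, and $\log(R_{\mathrm{out}}/r_{\mathrm{in}})$, where $r_{\mathrm{in}}$ is the radius of a Euclidean ball contained in the feasible region and $R_{\mathrm{out}}$ is the radius of a ball containing it. So the argument has three parts: (i) writing $\momt$ as an explicit SDP of size polynomial in $n$; (ii) producing an outer ball of radius $R_{\mathrm{out}} \leq 2^{\poly(n)}$ using explicit boundedness; and (iii) producing an inner ball of radius $r_{\mathrm{in}} \geq 2^{-\poly(n)}$ using hypotheses (1) and (2). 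Since the paper points to \cref{sec:SDPformulation} for the details, I would only sketch these steps here.

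For (i): the feasible region of \cref{EQ:introductionMOM} is $\{L \in \R[\mx]_{2t}^* : L(1) = 1,\ M_t(L) \succeq 0,\ M_t(g_iL) \succeq 0\ (i \in [m]),\ M_t(h_jL) = 0\ (j \in [\ell])\}$, and the objective $L \mapsto L(f)$ is linear; the variables are the moments $L(\mx^\gamma)$ for $|\gamma| \leq 2t$, of which there are $\binom{n+2t}{2t} = \poly(n)$ since $t$ is fixed, and the linear-matrix-inequality blocks have size $\binom{n+t}{t} = \poly(n)$. The coefficients of all the linear forms defining the SDP are polynomially-sized because those of $f, g_i, h_j$ are (by the running Assumption). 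For (ii): explicit boundedness gives the constraint $g_1 = R^2 - \|\mx\|_2^2 \geq 0$, so $M_t(g_1 L) \succeq 0$ forces, reading off diagonal entries, $L(\mx^{2\gamma}) \leq R^2 \cdot L(\mx^{2\gamma - 2e_i})$ for suitable $i$, and an easy induction on $|\gamma|$ (together with $L(1)=1$) shows $|L(\mx^\gamma)| \leq R^{|\gamma|} \leq R^{2t} \leq 2^{\poly(n)}$ for all $\gamma$; hence the feasible region lies in a ball of radius $R_{\mathrm{out}} = \poly(n)\cdot R^{2t} = 2^{\poly(n)}$ in the moment coordinates.

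Part (iii) is the crux, and it is where hypotheses (1) and (2) enter. Take the witness $L$ from the statement; I want to show that a small ball around $L$ (intersected with the affine subspace cut out by $L(1)=1$ and the equalities $M_t(h_jL)=0$) lies in the feasible region, i.e. that all $M_t(\cdot L')$ and $M_t(g_i \cdot L')$ stay positive semidefinite for $L'$ close to $L$. The obstruction is that these matrices are generally only PSD, not PD, so perturbing can destroy feasibility in the directions along their kernels. This is exactly what condition (1) is designed to handle: if $v$ is a kernel vector of $M_t(g_iL)$, writing $v$ as the coefficient vector of a polynomial $p$ gives $L(g_i p^2) = v^\top M_t(g_i L) v = 0$, and (1) says $g_i p^2 = \sum_j p_j h_j$ with controlled degrees, so the value $L'(g_i p^2) = \sum_j L'(p_j h_j)$ is a linear combination of entries of the localizing matrices $M_t(h_j L')$, which vanish on the affine subspace we restrict to. Thus on that subspace the kernel of $M_t(g_i L)$ persists as a kernel for every nearby $M_t(g_i L')$, so feasibility only needs to be checked on the orthogonal complement, where $M_t(g_i L)$ is strictly positive with smallest eigenvalue $\geq 2^{-\poly(n)}$ by (2); a perturbation of size $2^{-\poly(n)}$ then keeps it PSD. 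Quantifying this — bounding the operator norm of the perturbation of each localizing matrix by $\poly(n)$ times the perturbation of $L$ (again using the polynomial degree/size bounds from (1) and the running Assumption), and combining with the eigenvalue lower bound — yields an explicit $r_{\mathrm{in}} \geq 2^{-\poly(n)}$. Feeding $R_{\mathrm{out}}, r_{\mathrm{in}}$ and the desired accuracy $\varepsilon \geq 2^{-\poly(n)}$ into the ellipsoid-method guarantee gives a running time of $\poly(n, \log(1/\varepsilon), \log(R_{\mathrm{out}}/r_{\mathrm{in}})) = \poly(n)$, and $\sost = \momt$ here because strict feasibility (which we have just established for \cref{EQ:introductionMOM}) gives strong duality with \cref{EQ:introductionSOS}. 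I expect the bookkeeping in part (iii) — correctly restricting to the affine subspace and tracking how kernel directions of the localizing matrices are preserved under perturbation — to be the main technical obstacle; parts (i) and (ii) are routine.
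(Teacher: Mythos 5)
Your proposal follows essentially the same route as the paper: it casts \cref{EQ:LMOM} as a block-diagonal SDP in the moment variables, obtains the outer ball from $|L(\mx^\alpha)|\le R^{|\alpha|}$ (the paper's \cref{LEM:Lupbound}), and obtains the inner ball exactly as in the paper's \cref{prop:L0 standard form} --- condition (1) forces kernel vectors of the localizing matrices to persist for every $L'$ in the affine subspace cut out by the $h_j$-equalities, while condition (2) controls the eigenvalues on the orthogonal complement --- before invoking a polynomial-time SDP solver (the paper uses the interior-point guarantee of \cref{THM: deKlerkVallentin} rather than the ellipsoid method, an immaterial difference). The only slip is your closing justification of $\sost=\momt$ via ``strict feasibility'': the inner ball you construct lies in the affine subspace on which the moment and localizing matrices may still have nontrivial kernels, so Slater's condition is not actually established; the paper instead invokes strong duality under explicit boundedness as a separately cited result.
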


The statement of \cref{PROP:main} is stronger than the result of Raghavendra \& Weitz in the sense that it guarantees polynomial-time computation of the bound $\mathrm{sos}(f)_t$, whereas Theorem~\ref{THM:RW} only guarantees existence of a compact representation \cref{EQ:SOSRepresentation}.
Furthermore, its conditions do not require \emph{strict positivity} of the inequality constraints $\mg$ on $S(\mg, \mh)$. As we see below, it therefore applies to several natural settings where \cref{THM:RW} may not be applied. On the other hand, the first condition of \cref{PROP:main} is more restrictive than the first condition of \cref{THM:RW}. We note that it is satisfied for example when $L$ is the linear operator associated to a positive Borel measure supported on $\Sgh$, and the constraints $\mh$ form a Gr\"obner basis of a real radical ideal (cf.~\cite[Sec.~2]{Laurent:polopt}). 

Our first contribution is a sufficient condition for the second requirement of \cref{PROP:main}.

\begin{theorem} \label{THM:SUFFICIENT}
    Let $\Sgh$ be an explicitly bounded semialgebraic set with $R \leq 2^{\poly(n)}$ and let $L \in \R[\mx]_{2t}^*$ be a feasible solution to $\momt$ for $t \in \N$ fixed. Assume that $L(\mx^{\alpha}) \in \Q$ has polynomial bit-complexity for all $\alpha \in \N^n_{2t}$. Then the smallest non-zero eigenvalue of $M_t(L)$ is at least $2^{-\poly(n)}$ and the same holds for the localizing matrices $M_t(gL)$ for $g \in \mathbf g$.
\end{theorem}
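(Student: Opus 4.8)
The plan is to reduce the eigenvalue bound to a question about the rational numbers $L(\mathbf{x}^\alpha)$ via two classical facts. First, for a symmetric positive semidefinite matrix $M$ with rational entries of bounded bit-complexity, the smallest \emph{non-zero} eigenvalue of $M$ is at least $2^{-\poly(n)}$: this follows because the non-zero eigenvalues of $M$ are exactly the inverses of the non-zero eigenvalues of the Moore--Penrose pseudoinverse $M^+$, and $M^+$ can be written as a ratio of polynomial expressions in the entries of $M$ (using the adjugate/cofactor formula restricted to a maximal non-singular principal submatrix, or more simply: the nonzero eigenvalues of $M$ are the roots of the polynomial $\det(\lambda I - M)/\lambda^k$ where $k = \dim\ker M$, which is a monic integer-coefficient polynomial after clearing the common denominator, whose roots are lower-bounded away from $0$ by Cauchy's bound once the constant term -- the product of the nonzero eigenvalues -- is itself at least $2^{-\poly(n)}$). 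So I would first establish: if a PSD matrix $M$ of dimension $N = \binom{n+t}{t} = \poly(n)$ has rational entries with numerators and denominators of bit-size $\poly(n)$, then $\lambda_{\min}^{\neq 0}(M) \geq 2^{-\poly(n)}$.

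Second, I would argue that the entries of $M_t(L)$ and of each localizing matrix $M_t(gL)$ have polynomial bit-complexity under the hypotheses. For $M_t(L)$ this is immediate: its entries are exactly the values $L(\mathbf{x}^{\alpha+\beta})$, which are rationals of polynomial bit-complexity by assumption. For $M_t(gL)$ with $g \in \mathbf g$, the entry indexed by $(\alpha,\beta)$ is $L\big(g(\mathbf{x})\,\mathbf{x}^{\alpha+\beta}\big)$; expanding $g$ in the monomial basis and using linearity of $L$, this is an integer-combination-like sum $\sum_\gamma g_\gamma\, L(\mathbf{x}^{\alpha+\beta+\gamma})$, where $g$ has $O(1)$ terms (its degree is independent of $n$) with coefficients of polynomial bit-complexity (by the standing Assumption on the data of \cref{EQ:POP}), and each $L(\mathbf{x}^{\alpha+\beta+\gamma})$ has polynomial bit-complexity. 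A sum of $\poly(n)$-many (in fact $O(1)$-many) rationals each of polynomial bit-complexity is again of polynomial bit-complexity, so every localizing matrix has entries of polynomial bit-complexity. Since $L$ is \emph{feasible} for $\momt$, these matrices are PSD, so the first step applies verbatim and yields the claimed bound $2^{-\poly(n)}$ on their smallest non-zero eigenvalues.

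The main obstacle -- and the step deserving the most care -- is the first one: quantifying the smallest non-zero eigenvalue of a rational PSD matrix in terms of its bit-complexity. The naive statement "a rational matrix has a not-too-small smallest eigenvalue" is false in the presence of a kernel (the eigenvalue $0$ occurs), so one genuinely needs to restrict to the nonzero spectrum, and the cleanest route is through the characteristic polynomial: write $\chi_M(\lambda) = \lambda^k q(\lambda)$ with $q(0) \neq 0$, clear denominators to make $q$ have integer coefficients of bit-size $\poly(N)\cdot\poly(n) = \poly(n)$, and observe that the product of the nonzero eigenvalues is $|q(0)|$ divided by the leading-coefficient-adjustment, which is a nonzero rational of polynomial bit-complexity, hence $\geq 2^{-\poly(n)}$; combined with the trivial upper bound $\lambda_{\max}(M) \leq \operatorname{tr}(M) \leq 2^{\poly(n)}$ on each eigenvalue, the smallest nonzero eigenvalue is at least $(\text{product of nonzero eigenvalues})/\lambda_{\max}^{N-1} \geq 2^{-\poly(n)}$. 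The only subtlety is bounding the bit-complexity of the coefficients of $\chi_M$, but these are (up to sign) the sums of principal minors of $M$, each a determinant of a $\poly(n)\times\poly(n)$ rational matrix with polynomial-bit-complexity entries, hence of polynomial bit-complexity by Hadamard's inequality together with the bound on denominators.
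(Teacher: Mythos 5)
Your proposal is correct and follows essentially the same route as the paper: the paper also reduces everything to the fact that a symmetric integer matrix of size $\poly(n)$ with entries bounded by $2^{\poly(n)}$ has smallest non-zero eigenvalue at least $2^{-\poly(n)}$ (which it cites as Lemma~3.1 of Raghavendra--Weitz rather than re-deriving via the characteristic polynomial, as you do), applied after clearing a common denominator $C \leq 2^{\poly(n)}$ from the entries of $M_t(L)$ and $M_t(gL)$. One small slip: a polynomial $g$ of degree $d$ independent of $n$ has up to $\binom{n+d}{d} = \poly(n)$ terms, not $O(1)$, but your main claim (a sum of $\poly(n)$-many rationals of polynomial bit-complexity) already covers this.
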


Our second contribution is an alternative, \emph{geometric} condition on the feasible region $S(\mathbf{g}, \mathbf{h})$ of \cref{EQ:POP} which guarantees polynomial-time computation of $\mathrm{sos}(f)_t$ in the special case where the formulation does not contain any equality constraints. We write $S(\mathbf g)$ for $S(\mathbf g, \emptyset)$.
\begin{theorem}\label{THM:FULLDIM}
Let $S(\mathbf{g}) \subseteq \R^n$ be a semialgebraic set defined only by inequalities. Assume that the following two conditions are satisfied:
\begin{enumerate}
    \item $S(\mathbf{g})$ is explicitly bounded: $g_1(\mx) = R^2 - \sum_{i=1}^n \mx_i^2$ with constant $1 \leq R \leq 2^{\mathrm{poly}(n)}$.
    \item $S(\mathbf{g})$ contains a ball of radius $r \geq 2^{-\mathrm{poly}(n)}$, i.e., $B(z, r) \subseteq S(\mathbf{g})$ for some $z \in \R^n$.
\end{enumerate}
Then, for fixed $t \geq \lceil \deg(f)/2\rceil$ and $\varepsilon \geq 2^{-\mathrm{poly}(n)}$, the bound $\mathrm{sos}(f)_t$ may be computed in polynomial time in~$n$ up to an additive error of at most $\varepsilon$. 
\end{theorem}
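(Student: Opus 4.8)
The plan is to derive this from \cref{PROP:main}, which I apply with $\ell = 0$ since $S(\mathbf g)$ has no equality constraints. By hypothesis $S(\mathbf g)$ is explicitly bounded with $1 \le R \le 2^{\poly(n)}$ and $t \ge \lceil\deg(f)/2\rceil$, so it remains to exhibit a linear functional $L \in \R[\mx]_{2t}^*$ with $L(1) = 1$ satisfying the two conditions of \cref{PROP:main}. For $\ell = 0$ the first condition becomes simply: for every $g \in \mathbf g$ and every $p \in \R[\mx]_{t - \lceil\deg(g)/2\rceil}$, if $L(g p^2) = 0$ then $g p^2 \equiv 0$.

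The functional will be $L := L_\mu$ as in \cref{EQ:measureoperator}, with $\mu$ the uniform probability measure on a suitable small ball inside $S(\mathbf g)$. Starting from the ball $B(z,r) \subseteq S(\mathbf g)$ of condition~(2) with $r \ge 2^{-\poly(n)}$, I would round coordinates to get $z' \in \Q^n$ and a dyadic $r' \in \Q_{>0}$, both of polynomial bit-complexity, with $r' \ge 2^{-\poly(n)}$ and $B(z', r') \subseteq B(z, r) \subseteq S(\mathbf g)$; here one uses $\|z\| \le R \le 2^{\poly(n)}$ to ensure $z'$ has polynomial bit-complexity. Then $L = L_\mu$ has $L(1) = 1$ and is feasible for $\momt$ because $\mathrm{supp}(\mu) = B(z',r') \subseteq S(\mathbf g)$. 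The first condition of \cref{PROP:main} holds because any polynomial (here $g p^2$) that is nonnegative on the full-dimensional ball $B(z',r')$ and integrates to $0$ against $\mu$ must vanish on $B(z',r')$, hence vanish identically.

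For the second condition of \cref{PROP:main} I would invoke \cref{THM:SUFFICIENT}: it suffices to verify that every moment $L(\mx^\alpha)$ with $|\alpha| \le 2t$ is rational with polynomial bit-complexity. Substituting $\mx = z' + r' \mathbf u$ gives $L(\mx^\alpha) = \sum_{\beta \le \alpha} \binom{\alpha}{\beta} (z')^{\alpha - \beta} (r')^{|\beta|}\, \mathbb{E}_{B(0,1)}[\mathbf u^\beta]$, a sum of $O(1)$ terms as $|\alpha| \le 2t = O(1)$. The moments $\mathbb{E}_{B(0,1)}[\mathbf u^\beta]$ of the uniform measure on the unit ball vanish unless all $\beta_i$ are even, in which case they equal a classical closed-form expression: a ratio of products of a bounded number of factorials of integers that are $O(n)$, hence rational with polynomial bit-complexity. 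Combined with $\|z'\|, 1/r' \le 2^{\poly(n)}$ and $|\alpha| = O(1)$, this makes $L(\mx^\alpha)$ rational with polynomial bit-complexity, so \cref{THM:SUFFICIENT} provides the lower bound $2^{-\poly(n)}$ on the smallest nonzero eigenvalues of $M_t(L)$ and of $M_t(gL)$ for $g \in \mathbf g$. \cref{PROP:main} then gives the conclusion.

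I expect the only nonroutine ingredient to be the bit-complexity bound for the unit-ball moments $\mathbb{E}_{B(0,1)}[\mathbf u^\beta]$; this should follow directly from their explicit value, for instance by writing the uniform measure on $B(0,1)$ as a product of a radial part with density proportional to $\rho^{n-1}$ on $[0,1]$ and the uniform measure on the sphere $S^{n-1}$, and computing each factor separately. Everything else is routine bookkeeping with polynomial bit-complexities.
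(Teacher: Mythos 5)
Your proposal is correct and follows essentially the same route as the paper: apply \cref{PROP:main} (with no equality constraints, so condition (1) is automatic by full-dimensionality) to the functional $L_\mu$ of a uniform measure on a small full-dimensional set inside $S(\mathbf g)$, verifying condition (2) via \cref{THM:SUFFICIENT} by bounding the bit-complexity of the moments. The only difference is cosmetic: the paper replaces the ball $B(z,r)$ by an inscribed cube $[-r,r]^n+z$ so that the moments factor into a trivial product $\prod_i r^{\alpha_i}/(\alpha_i+1)$, whereas you keep the ball and invoke the classical rational formula for its moments --- both yield rational moments of $\poly(n)$ bit-complexity, so either works.
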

The inclusions $B(z,r) \subseteq S(\mathbf g) \subseteq B(z,R)$ for $\frac{1}{r}, R \leq 2^{\poly(n)}$ are a natural way to ensure that $f$ has an approximate minimizer over $S(\mathbf g)$ whose bit-complexity is $\poly(n)$. Furthermore, they are very reminiscent of the sufficient conditions for solving semidefinite programs in polynomial time, see \cref{THM: deKlerkVallentin} below. 

As we will see in Proposition~\ref{PROP:smallcube}, it is possible to choose constraints $\mathbf g = (g_i)$, each with \emph{constant} bit-complexity, such that the second condition of Theorem~\ref{THM:FULLDIM} is not satisfied. Notably, the resulting semialgebraic set $S(\mathbf{g})$ does not satisfy the conditions of Theorem~\ref{THM:RW} or Theorem~\ref{PROP:main} either.

Finally, as a third contribution, we make explicit the connection between computational aspects of the primal formulation \cref{EQ:introductionSOS} of the sos-hierarchy, and its dual formulation \cref{EQ:LMOM} in terms of 
{moments}. 
This connection is implicitly present in the proof of Theorem~\ref{THM:RW} in~\cite{RaghavendraWeitz:bitcomplexity}. 
\begin{theorem} \label{THM:CONNECTION}
Let $S(\mathbf{g}, \mathbf{h})$ be a semialgebraic set and suppose that the conditions of Theorem~\ref{PROP:main} or Theorem~\ref{THM:FULLDIM} are satisfied. 
Then, for a fixed $t \geq \lceil \deg(f)/2\rceil$ and $\eps>0$, there exists a sum-of-squares representation~\cref{EQ:SOSRepresentation} proving nonnegativity of $f - \mathrm{sos}(f)_t+\eps$ on $S(\mathbf{g}, \mathbf{h})$ with bit-complexity $\poly(n,\log(1/\eps))$. 
\end{theorem}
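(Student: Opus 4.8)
The plan is to exploit semidefinite programming duality between \eqref{EQ:introductionSOS} and \eqref{EQ:introductionMOM}, transferring well-conditioning from the dual to the primal. The hypotheses of \cref{PROP:main}/\cref{THM:FULLDIM} say precisely that the dual problem \eqref{EQ:introductionMOM} is well-conditioned: it has a feasible functional $L^\star$ with $L^\star(1)=1$ whose moment matrix $M_t(L^\star)$ and localizing matrices $M_t(gL^\star)$, $g\in\mg$, are bounded below by $\rho I$ on their range with $\rho\ge 2^{-\poly(n)}$ (for \cref{THM:FULLDIM}, take $L^\star=L_\mu$ with $\mu$ the uniform measure on the inner ball $B(z,r)\subseteq S(\mg)$), and, since the problem is explicitly bounded, whose moments are bounded: the constraint $M_t(g_1L^\star)\succeq0$ with $g_1=R^2-\|\mx\|_2^2$ forces $L^\star(\mx^{2\alpha})\le R^{2|\alpha|}$, hence $|L^\star(\mx^\alpha)|\le 2^{\poly(n)}$ for $|\alpha|\le 2t$. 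The point is that one never argues on the SOS side directly; everything is read off from this dual witness.

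First I would fix a rational $\lambda$ with $\sost-\eps\le\lambda<\sost$; such a $\lambda$ of bit-complexity $\poly(n,\log(1/\eps))$ is computable in polynomial time by \cref{PROP:main}/\cref{THM:FULLDIM}. Since $\lambda<\sost$, the polynomial $f-\lambda$ has a degree-$2t$ representation~\eqref{EQ:SOSRepresentation}; fix one, with $Q_i\succeq0$ the Gram matrix of $\sigma_i$. Applying $L^\star$ to the identity $f-\lambda=\sum_i g_i\sigma_i+\sum_j h_jp_j$ and using that a feasible $L^\star$ vanishes on the degree-$\le2t$ part of the ideal $\langle\mh\rangle$, so the equality terms drop out, yields
\[
  L^\star(f)-\lambda=\sum_i\langle M_t(g_iL^\star),Q_i\rangle\ \ge\ \rho\sum_i\operatorname{Tr}(Q_i),
\]
whence $\sum_i\operatorname{Tr}(Q_i)\le(L^\star(f)-\lambda)/\rho\le2^{\poly(n)}$, using $L^\star(f)\le2^{\poly(n)}$ and $\sost=\momt\ge-2^{\poly(n)}$ (again from the moment bound). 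So every $\sigma_i$ has coefficients of magnitude $2^{\poly(n)}$. Finally $r:=f-\lambda-\sum_i g_i\sigma_i=\sum_j h_jp_j$ lies in the image of the fixed rational linear map $T\colon(q_j)_j\mapsto\sum_j h_jq_j$ on tuples with $\deg(h_jq_j)\le2t$, whose nonzero singular values are $\ge2^{-\poly(n)}$ since $T$ has $\poly(n)$-bit data (cf.\ the proof of \cref{THM:SUFFICIENT}), so $T^{+}r$ furnishes replacement multipliers $p_j$ of magnitude $2^{\poly(n)}$. This step already gives a representation~\eqref{EQ:SOSRepresentation} of $f-\lambda$ with all multipliers of magnitude $2^{\poly(n)}$ — the conclusion of \cref{THM:RW}, now under the present (incomparable) hypotheses; for \cref{THM:FULLDIM} there are no equalities and the last step is vacuous.

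To upgrade this to a certificate that is not merely bounded but \emph{writable} with $\poly(n,\log(1/\eps))$ bits, I would extract it as the dual output of the ellipsoid method applied to \eqref{EQ:introductionMOM}. By the above, together with the analysis behind \cref{PROP:main}/\cref{THM:FULLDIM} — which is where condition~(1) of \cref{PROP:main} enters, handling the affine hull of the equality constraints — this SDP has an inner ball of radius $2^{-\poly(n)}$ inside an outer ball of radius $2^{\poly(n)}$, so by \cref{THM: deKlerkVallentin} the ellipsoid method reaches an $\eps$-optimal point in $\poly(n,\log(1/\eps))$ iterations, carrying $\poly(n,\log(1/\eps))$-bit numbers. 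Its separating hyperplanes for the constraints $M_t(g_iL)\succeq0$ have the form $L(g_ip^2)\ge0$ (from a minimal eigenvector of $M_t(g_iL)$), those for the equalities the form $L(h_j\mx^\gamma)=0$, and the objective cuts the form $L(f)\le\mu$; the nonnegative combination of these certifying near-optimality, which the method produces, is exactly an identity $f-\lambda=\sum_i g_i\sigma_i+\sum_j h_jp_j+e$ with each $\sigma_i$ a sum of squares, $\lambda\ge\sost-\eps$, all data of bit-complexity $\poly(n,\log(1/\eps))$, and $e$ an error polynomial whose coefficient-norm can be driven below any prescribed $2^{-\poly(n,\log(1/\eps))}$. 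Absorbing $e$ — lowering $\lambda$ by a controlled amount and adding a small sum-of-squares correction supported on $g_0=1$ and $g_1$, which explicit boundedness permits — produces an exact representation~\eqref{EQ:SOSRepresentation} of $f-\lambda'$ for a rational $\lambda'\in[\sost-O(\eps),\sost]$ of bit-complexity $\poly(n,\log(1/\eps))$, the desired certificate (after replacing $\eps$ by $O(\eps)$).

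The hard part is conceptual rather than computational: the primal feasible region of \eqref{EQ:introductionSOS} need not contain any inner ball — this is exactly the O'Donnell/Raghavendra--Weitz obstruction — so one cannot run an SDP solver on \eqref{EQ:introductionSOS} directly, and the naive ``a bounded rational spectrahedron contains a small rational point'' principle is simply false. The whole argument must route through the well-conditioned dual: pairing against $L^\star$ for the existence of a size-bounded certificate, and reading the certificate off the dual side of the ellipsoid run on \eqref{EQ:introductionMOM} for the bit-complexity bound. The one genuinely fiddly technical step is the last one — converting the ellipsoid method's \emph{approximate} dual combination into an \emph{exact} identity in $\R[\mx]_{2t}$ — which is why explicit boundedness is invoked a second time.
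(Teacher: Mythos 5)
Your first half — pairing the representation of $f-\lambda$ against the well-conditioned dual witness $L^\star$ to bound the Gram matrices — is essentially the paper's argument, but the displayed inequality $\sum_i\langle M_t(g_iL^\star),Q_i\rangle\ge\rho\sum_i\operatorname{Tr}(Q_i)$ has a gap in the \cref{PROP:main} branch. You only know the localizing matrices are bounded below by $\rho$ \emph{on their ranges}; if $M_t(g_iL^\star)$ has a nontrivial kernel and $Q_i$ has mass there, the pairing controls only $\operatorname{Tr}(P_iQ_iP_i)$ (with $P_i$ the range projector), not $\operatorname{Tr}(Q_i)$, so the coefficients of the kernel components of the $s_{i,k}$ are \emph{not} bounded by this argument. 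This is exactly where condition~(1) of \cref{PROP:main} must be applied: the paper splits into the cases $L(g_is_{i,k}^2)\ne0$ (where your eigenvalue bound works) and $L(g_is_{i,k}^2)=0$ (where condition~(1) lets one rewrite $g_is_{i,k}^2$ as an element of the truncated ideal and move it out of the SOS part). You cite condition~(1) only in passing, for "the affine hull of the equality constraints," but not at the step where it is actually needed. (In the \cref{THM:FULLDIM} branch the matrices are positive definite and your inequality is fine.)

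The second half is where your route genuinely diverges, and where the proposal stops being a proof. You assert that the ellipsoid method applied to \cref{EQ:LMOM} "produces" a nonnegative combination of its separating hyperplanes that assembles into an exact polynomial identity $f-\lambda=\sum_ig_i\sigma_i+\sum_jh_jp_j+e$ with controlled bit-complexity. That extraction is not automatic — turning an approximate dual certificate from an ellipsoid run into an exact weighted-SOS identity in $\R[\mx]_{2t}$ is precisely the delicate point, and you give no argument for it. The paper avoids this entirely: having bounded the magnitudes of all multipliers (the $s_{i,k}$ via $L^\star$, the $p_j$ via Cramer's rule on the linear system $A\mathbf p=\mathbf b$, which is equivalent to your pseudo-inverse step), it simply \emph{rounds} each coefficient to the nearest integer multiple of $\eps$ — immediately giving bit-complexity $\poly(n,\log(1/\eps))$ since the magnitudes are $2^{\poly(n)}$ — and absorbs the resulting error polynomial $\err$ with $\|\err\|_1\le\eps R^{2t}2^{\poly(n)}$ by adding the constant $R^{2t}\|\err\|_1$ and invoking \cref{lem:powers in M}, which shows $R^{|\alpha|}-\mx^\alpha\in\mathcal M(\mathbf g)_{2t}$ for every monomial, so that $R^{2t}\|\err\|_1-\err(\mx)$ lies in the truncated quadratic module. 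Your closing sentence gestures at this absorption ("a small sum-of-squares correction supported on $g_0$ and $g_1$") but without the monomial-by-monomial lemma it is not established that arbitrary coefficient errors can be absorbed. Replacing your ellipsoid-extraction paragraph by the rounding-plus-absorption argument would both close the gap and simplify the proof.
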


\section{On the geometric condition} \label{SEC:Applications}
Before we move on to the proofs of our results, let us give some examples of natural settings where they may be applied. In general, Theorem~\ref{THM:FULLDIM} is better equipped to deal with non-discrete semialgebraic sets $S(\mg, \mh)$ than Theorem~\ref{THM:RW}. The third condition of Theorem~\ref{THM:RW}, which demands in particular that $g_i(\mx) > 0$ for each $\mx \in S(\mg, \mh)$, is rather hard to satisfy: 
\begin{exmp}
The unit hypercube $[-1, 1]^n$, the unit ball $B^n \subseteq \R^n$ and the standard simplex $\Delta^n \subseteq \R^n$ are semialgebraic sets, defined by:
\begin{align*}
    [-1, 1]^n &= \{ \mx \in \R^n : 1 - \mx_i^2 \geq 0, \quad i=1, 2, \ldots, n\}, \\
    B^n &= \{ \mx \in \R^n : 1 - \|\mx\|_2^2 \geq 0 \}, \\
    \Delta^n &= \{ \mx \in \R^n : \mx_i \geq 0, ~1 - \textstyle\sum_{i=1}^n \mx_i \geq 0 \}.
\end{align*}
It is straightforward to see that they each satisfy the conditions of Theorem~\ref{THM:FULLDIM} (after adding a ball constraint $g_1(\mx) = R^2 - \|\mx\|_2^2$ if needed). They do not, however, satisfy the third condition of Theorem~\ref{THM:RW}.
\end{exmp}

The following proposition gives an alternative sufficient condition related to strict feasibility of~\cref{EQ:POP}, which implies that the conditions for Theorem~\ref{THM:FULLDIM} are satisfied and thus guarantees polynomial-time computability of the bound $\momt$. 
\begin{proposition} \label{prop:strict feas low bit-complexity}
Let $S(\mg)$ be a full-dimensional semialgebraic set contained in a ball of radius $R \leq 2^{\poly(n)}$. Assume that there exists a rational point $\mx \in S(\mg)$ with $g_i(\mx) > 0$ for all $i$, whose bit-complexity is polynomial in $n$. Then $S(\mg)$ contains a ball of radius $r \geq 2^{-\poly(n)}$. 
\end{proposition}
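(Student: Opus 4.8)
The plan is to take the required ball to be centered at the given rational point itself. Write $\mx_0$ for the point from the hypothesis, so that $g_i(\mx_0) > 0$ for all $i$ and $\mx_0$ has bit-complexity $\poly(n)$. To produce a ball $B(\mx_0, r) \subseteq S(\mg)$ with $r \geq 2^{-\poly(n)}$, I need two quantitative estimates: a lower bound on each value $g_i(\mx_0)$, and an upper bound on how fast each $g_i$ can decrease as one moves away from $\mx_0$.

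For the first estimate, I would observe that $g_i(\mx_0)$ is a positive rational number of polynomial bit-complexity. Indeed, since $\deg(g_i)$ is a constant and the coefficients of $g_i$ have bit-complexity $\poly(n)$ (by our standing assumption on the input), the value $g_i(\mx_0)$ is a sum of $\binom{n + \deg(g_i)}{\deg(g_i)} = \poly(n)$ terms, each the product of one coefficient of $g_i$ with at most $\deg(g_i)$ coordinates of $\mx_0$ --- all of them rationals with numerator and denominator bounded by $2^{\poly(n)}$. Clearing denominators writes $g_i(\mx_0) = a_i / b_i$ with $a_i, b_i \in \Z_{\geq 1}$ (here using $g_i(\mx_0) > 0$) and $b_i \leq 2^{\poly(n)}$, whence $g_i(\mx_0) \geq 2^{-\poly(n)}$ for every $i$.

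For the second estimate, note first that $\|\mx_0\|_2 \leq 2^{\poly(n)}$ since $\mx_0$ has polynomial bit-complexity, and set $\rho := \|\mx_0\|_2 + 1 \leq 2^{\poly(n)}$. Each partial derivative $\partial g_i / \partial x_k$ is a polynomial of degree $< \deg(g_i)$ with $\poly(n)$ monomials and coefficients of magnitude $\leq 2^{\poly(n)}$, hence is bounded in absolute value by $2^{\poly(n)}$ on the ball $B(0,\rho)$; summing over $k$ yields $\|\nabla g_i(\mx)\|_2 \leq L_i$ on $B(0,\rho)$ for some $L_i \leq 2^{\poly(n)}$. Put $L := 1 + \max_i L_i \leq 2^{\poly(n)}$ and
\[
  r := \min\Bigl\{\, 1,\ \tfrac{1}{2L}\,\min_i g_i(\mx_0) \,\Bigr\} \ \geq\ 2^{-\poly(n)},
\]
where the lower bound combines the two estimates. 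For $\mx \in B(\mx_0, r)$ the segment $[\mx_0, \mx]$ stays in $B(0,\rho)$ (as $r \leq 1$), so the mean value estimate $|g_i(\mx) - g_i(\mx_0)| \leq L_i \|\mx - \mx_0\|_2 \leq L_i r \leq L r$ gives $g_i(\mx) \geq g_i(\mx_0) - L r \geq \tfrac12 g_i(\mx_0) > 0$ for all $i$. Hence $B(\mx_0, r) \subseteq S(\mg)$, which proves the proposition; this also re-derives that $S(\mg)$ is full-dimensional, so that hypothesis is not really used.

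I do not expect a genuine obstacle. The point requiring care is the bookkeeping behind the various $2^{\poly(n)}$ and $2^{-\poly(n)}$ bounds: each is produced from a constant number of arithmetic operations applied to $\poly(n)$ quantities (the coefficients of the $g_i$ and the coordinates of $\mx_0$) of bit-size $\poly(n)$, and one should check that at every step the class $\poly(n)$ is preserved --- which it is, being closed under addition, multiplication, and $\poly(n)$-fold sums and products.
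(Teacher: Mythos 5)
Your proposal is correct and follows essentially the same approach as the paper's own proof: lower-bound each $g_i(\mx_0)$ by $2^{-\poly(n)}$ using the rationality and bit-complexity of $\mx_0$ and the coefficients, upper-bound the Lipschitz constant of each $g_i$ near $\mx_0$ by $2^{\poly(n)}$, and combine to obtain a ball of radius $r \ge 2^{-\poly(n)}$ around $\mx_0$ contained in $S(\mg)$. Your version is a bit more explicit in its bookkeeping, and you correctly observe two minor points the paper leaves implicit: the full-dimensionality hypothesis is not actually needed, and the containment $S(\mg)\subseteq B(0,R)$ can be replaced by the bound $\|\mx_0\|_2\le 2^{\poly(n)}$ already implied by the bit-complexity of $\mx_0$.
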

\begin{proof}
As the polynomials $g_i$ are of fixed degree and have bounded coefficients, their Lipschitz constants on $B(0, R)$ can be bounded by $2^{\poly(n)}$. Furthermore, as $\mx$ has polynomial bit-complexity in $n$, we know that ${g_i(\mx) \in \Q}$ has polynomial bit-complexity as well, and therefore $g_i(\mx) > 0$ implies that $g_i(\mx) \geq 2^{-\poly(n)}$. Together, this implies that there exists an $r \geq 2^{-\poly(n)}$ such that $g_i(\mathbf y) \geq 0$ for all $\mathbf y$ with ${\| \mx - \mathbf y\|_2 \leq r}$, i.e., such that $S(\mg)$ contains the ball of radius $r \geq 2^{-\poly(n)}$ centered at~$\mx$.
\end{proof}

\subsection{Semialgebraic sets with large volume contain a large ball}
Another class of semialgebraic sets satisfying the conditions of~\cref{THM:FULLDIM} are those of sufficiently large volume. In the case that $S(\mg)$ is convex, we have the following consequence of John's theorem.
\begin{lemma} \label{LEM:John}
Let $S(\mathbf g) \subseteq \R^n$ be a \emph{convex} semialgebraic set. Assume that $S(\mg)$ is contained in a ball of radius $R \leq 2^{\poly(n)}$. 
Then there exists a constant
\[r \ge \sqrt{\vol(S(\mg))}  \cdot 2^{-\poly(n)}
\]
such that $S(\mg)$ contains a (translated) ball of radius $r$. 
\end{lemma}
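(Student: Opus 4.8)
The plan is to invoke John's theorem, which states that any convex body $K \subseteq \R^n$ contains an ellipsoid $E$ (the John ellipsoid) such that $K \subseteq z + n(E - z)$, where $z$ is the center of $E$. First I would apply this to $K = S(\mg)$, obtaining an ellipsoid $E$ with semi-axes $a_1 \geq a_2 \geq \cdots \geq a_n > 0$ centered at some $z \in \R^n$. The ball of radius $r := a_n$ centered at $z$ is then contained in $E \subseteq S(\mg)$, so it suffices to bound $a_n$ from below by $\sqrt{\vol(S(\mg))} \cdot 2^{-\poly(n)}$.

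The key inequalities are as follows. On one hand, the dilation bound $S(\mg) \subseteq z + n(E-z)$ gives $\vol(S(\mg)) \leq n^n \vol(E) = n^n \kappa_n \prod_{i=1}^n a_i$, where $\kappa_n = \vol(B^n)$ is the volume of the unit ball; equivalently $\prod_i a_i \geq \vol(S(\mg))/(n^n \kappa_n)$. On the other hand, since $S(\mg) \subseteq B(0,R)$ with $R \leq 2^{\poly(n)}$, and $E \subseteq S(\mg)$, every semi-axis satisfies $a_i \leq 2R \leq 2^{\poly(n)}$. Combining, I get
\[
 a_n = \frac{\prod_{i=1}^n a_i}{\prod_{i=1}^{n-1} a_i} \;\geq\; \frac{\vol(S(\mg))}{n^n \kappa_n \cdot (2R)^{n-1}} \;\geq\; \vol(S(\mg)) \cdot 2^{-\poly(n)},
\]
using $\kappa_n \leq 1$ (or at worst $\kappa_n = 2^{\poly(n)}$, which is absorbed anyway) and $n^n = 2^{O(n\log n)} = 2^{\poly(n)}$ and $(2R)^{n-1} = 2^{\poly(n)}$ since $R \leq 2^{\poly(n)}$. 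This is in fact stronger than the claimed bound with $\sqrt{\vol(S(\mg))}$ when $\vol(S(\mg))$ is small; when $\vol(S(\mg))$ is large it is weaker, so to get the stated form I would simply note that $\vol(S(\mg)) \leq \vol(B(0,R)) = \kappa_n R^n = 2^{\poly(n)}$, hence $\vol(S(\mg)) \geq \sqrt{\vol(S(\mg))} \cdot 2^{-\poly(n)}$, and chain the two estimates.

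I do not anticipate a serious obstacle; the argument is essentially bookkeeping with the dimension-$n$ factors in John's theorem together with the a priori diameter bound. The one point requiring a little care is making sure all the stray constants ($\kappa_n$, $n^n$, powers of $2$, and the factor $R^{n-1}$) are genuinely of the form $2^{\poly(n)}$ under the standing hypothesis $R \leq 2^{\poly(n)}$ — this is where the assumption that $S(\mg)$ lies in a not-too-large ball is used in an essential way, since without it the ratio $\prod_{i<n} a_i$ could be arbitrarily large and $a_n$ arbitrarily small even for a body of fixed volume (think of a long thin box). A secondary bookkeeping choice is whether to state and use John's theorem for general convex bodies or only for symmetric ones; here $S(\mg)$ need not be symmetric, so I would cite the non-symmetric version with dilation factor $n$ (rather than $\sqrt n$), which does not affect the $2^{\poly(n)}$ conclusion.
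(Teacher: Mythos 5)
Your use of John's theorem and the subsequent volume bookkeeping is essentially the paper's own argument: both proofs lower-bound the product of the semi-axes of the John ellipsoid by $\vol(S(\mathbf g))\, n^{-n}\kappa_n^{-1}$ via the dilation inclusion, upper-bound each individual semi-axis by (a constant times) $R$ using $E \subseteq B(0,R)$, and divide. Up to that point your argument is correct and yields
\[
a_n \;\geq\; \frac{\vol(S(\mathbf g))}{n^n\,\kappa_n\,(2R)^{n-1}} \;\geq\; \vol(S(\mathbf g))\cdot 2^{-\poly(n)}.
\]

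The gap is in the final ``chaining'' step. From the upper bound $\vol(S(\mathbf g)) \leq \kappa_n R^n \leq 2^{\poly(n)}$ you cannot deduce $\vol(S(\mathbf g)) \geq \sqrt{\vol(S(\mathbf g))}\cdot 2^{-\poly(n)}$: that inequality is equivalent to $\sqrt{\vol(S(\mathbf g))} \geq 2^{-\poly(n)}$, i.e., to a \emph{lower} bound on the volume, which is not a hypothesis. (You also have the comparison reversed: for $\vol(S(\mathbf g)) < 1$ the linear bound is the \emph{weaker} one, since $v < \sqrt{v}$ there.) So as written you establish $r \geq \vol(S(\mathbf g))\cdot 2^{-\poly(n)}$ but not the stated $r \geq \sqrt{\vol(S(\mathbf g))}\cdot 2^{-\poly(n)}$, and in the only regime where the two differ materially (very small volume) no such upgrade is possible: the slab $[0,1]^{n-1}\times[0,\delta] \subseteq B(0,\sqrt{n})$ has volume $\delta$ and inradius $\delta/2$, so the square-root form fails for $\delta \ll 2^{-\poly(n)}$. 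Indeed, the linear dependence is what the John-ellipsoid method genuinely gives; the paper's route to the $\sqrt{\vol}$ form passes through the inequality $\lambda_i \geq \vol(S(\mathbf g))\,n^{-n}(R^2)^{-(n-1)}\vol(B(0,1))^{-1}$ for the eigenvalues of the ellipsoid matrix, whereas the argument actually yields $\lambda_i \geq \bigl(\vol(S(\mathbf g))\,n^{-n}\kappa_n^{-1}R^{-(n-1)}\bigr)^2$. Your linear bound is exactly what is needed downstream (the polytope corollary assumes $\vol P \geq 2^{-\poly(n)}$, and \cref{PROP:largevolume} states the non-convex analogue with the same linear dependence), so the correct fix is to state and prove the conclusion as $r \geq \vol(S(\mathbf g))\cdot 2^{-\poly(n)}$ rather than to attempt the chaining.
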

\begin{proof}
    By John's theorem~\cite{John:ellipsoid} there exists an ellipsoid $E$ with center $c \in \R^n$ such that 
    \[
    E \subseteq S(\mathbf g) \subseteq c+ n(E-c).
    \]
    These inclusions show that $n^n\vol(E) = \vol(c+n(E-c)) \geq \vol(S(\mathbf g))$ and thus $\vol(E) \geq \vol(S(\mathbf g)) n^{-n}$. Let $E = \{x \in \R^n: (x-c)^T A^{-1} (x-c) \leq 1\}$ for a positive definite matrix $A$. Let $v$ be an eigenvector of $A$ corresponding to eigenvalue $\lambda$. Then $c \pm \sqrt{\lambda} v \in E \subseteq B(0,R)$ and hence $\lambda \leq R^2$. Moreover, $\vol(E) = \vol(B(0,1)) \prod_{i=1}^n \sqrt{\lambda_i}$ where $\lambda_1,\ldots,\lambda_n$ are the eigenvalues of $A$. Combined with the lower bound on $\vol(E)$, this shows for each $i \in [n]$ that 
    \[
    \lambda_i \geq \vol(S(\mathbf g)) n^{-n} (R^2)^{-(n-1)} 
    \vol(B(0,1))^{-1}.
    \]
    We thus have $B(z,r) \subseteq E \subseteq S(\mathbf g)$ for $z = c$ and
    \[
        r = \left(\frac{\vol(S(\mathbf g))}{n^{n} R^{2(n-1)} \vol(B(0,1))}\right)^{\frac{1}{2}} \geq \sqrt{\vol(S(\mathbf g))} \cdot 2^{-\poly(n)}. \qedhere    \]
\end{proof}
\Cref{LEM:John} can be used to show that full-dimensional, rational polytopes contain a large enough ball 
(which is well known, see, e.g., \cite[Thm.~3.6]{schrijveretal:ellipsoidpaper}, \cite[Lem.~1-2]{GacsLovasz:polytope}).
\begin{corollary}
Let $P \subseteq \R^n$ be a full-dimensional polytope defined by rational linear inequalities with polynomial bit-complexity. Then $P$ is contained in a ball of radius $R \leq 2^{\poly(n)}$ and ${\vol P \geq 2^{-\poly(n)}}$. Using \cref{LEM:John}, $P$ thus contains a ball of radius $r \geq 2^{- \poly(n)}$. 
\end{corollary}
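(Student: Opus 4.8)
The plan is to control the vertices of $P$ via Cramer's rule, deduce the outer radius and the volume lower bound from an inscribed full-dimensional simplex, and then invoke \cref{LEM:John}.

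First I would write $P = \{\mx \in \R^n : \langle a_i, \mx\rangle \leq b_i \text{ for } i \in [N]\}$ with $a_i \in \Q^n$, $b_i \in \Q$ of bit-complexity $\poly(n)$ and $N = \poly(n)$. Since a polytope is bounded and equals the convex hull of its vertices, every vertex $v$ is the unique solution of a square system $A'\mx = b'$ obtained by turning $n$ linearly independent defining inequalities into equalities. By Cramer's rule each coordinate $v_k$ equals a quotient $\det(A'_k)/\det(A')$ of determinants of $n\times n$ matrices whose entries are among the $a_i$ and $b_i$. Clearing denominators and applying Hadamard's inequality then shows that $\det(A'_k)$ and $\det(A')$ are nonzero rationals whose numerator and denominator are bounded by $2^{\poly(n)}$; hence every vertex of $P$ has rational coordinates which, written over a common denominator, have numerator and denominator at most $2^{\poly(n)}$. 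In particular $\|v\|_2 \leq 2^{\poly(n)}$ for every vertex $v$, so $P \subseteq B(0,R)$ for some $R \leq 2^{\poly(n)}$.

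Next I would lower bound the volume. Since $P$ is full-dimensional its vertices affinely span $\R^n$, so there are $n+1$ affinely independent vertices $v_0, v_1, \ldots, v_n$ of $P$, and by convexity $P$ contains the simplex $\Delta = \mathrm{conv}(v_0, \ldots, v_n)$ with $\vol(\Delta) = \tfrac{1}{n!}\,|\det(v_1 - v_0, \ldots, v_n - v_0)|$. By the previous step these $v_i$ admit a common denominator $D \leq 2^{\poly(n)}$, so the determinant is a nonzero integer multiple of $D^{-n}$, hence at least $D^{-n} \geq 2^{-\poly(n)}$ in absolute value; therefore $\vol(P) \geq \vol(\Delta) \geq \tfrac{1}{n!}\,2^{-\poly(n)} = 2^{-\poly(n)}$. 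Feeding $R \leq 2^{\poly(n)}$ and $\vol(P) \geq 2^{-\poly(n)}$ into \cref{LEM:John} (which applies since $P$ is convex) then yields a ball $B(z,r) \subseteq P$ with $r \geq \sqrt{\vol(P)}\cdot 2^{-\poly(n)} \geq 2^{-\poly(n)}$.

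The main obstacle is purely the bookkeeping: turning the $\poly(n)$ bit-complexity hypothesis into $2^{\poly(n)}$ bounds on the numerators and denominators of the vertices through Cramer's rule and Hadamard's bound, and checking that full-dimensionality genuinely produces an inscribed simplex whose vertices share a $2^{\poly(n)}$ denominator. Both are standard, and could alternatively be replaced by a citation to \cite{schrijveretal:ellipsoidpaper, GacsLovasz:polytope}.
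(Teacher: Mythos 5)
Your proof is correct and follows essentially the same route as the paper: bound the vertices via Cramer's rule, lower-bound $\vol P$ by the volume of a simplex spanned by affinely independent vertices (the paper translates one vertex to the origin instead of subtracting $v_0$), and then apply \cref{LEM:John}. You are somewhat more explicit about the outer radius and the determinant lower bound via a common denominator, but the argument is the same.
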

\begin{proof}
Let $\{p_1, \ldots, p_N\}$ be the vertices of $P$, each of which has polynomial bit-complexity in $n$, as a consequence of Cramer's rule. As $P$ is full-dimensional, we may assume w.l.o.g. that $0 \in P$ and that $p_1, p_2, \ldots p_n$ are linearly independent. Therefore, 
\[
    0 < \vol \big(\mathrm{span} \{ p_1, \ldots p_{n}\} \big) = \frac{1}{n!} |\det \big (p_1~p_2~\ldots~p_{n} \big)|.
\]
But since the $p_i$'s have polynomial bit-complexity in $n$, we may conclude that 
\[
\vol P 
\geq \frac{1}{n!} |\det \big (p_1~p_2~\ldots~p_{n} \big)| \geq 2^{-\poly(n)}. \qedhere
\]
\end{proof}

In fact, a result similar to \cref{LEM:John} can be shown \emph{without} the assumption that $S(\mg)$ is convex. That is, \emph{any} bounded semialgebraic set $S(\mg) \in B(0,R)$ with $R \leq 2^{\poly(n)}$ and ${\vol (S(\mg)) \geq 2^{-\poly(n)}}$ satisfies the conditions of \cref{THM:FULLDIM}. 
To show this, we use an upper bound on the volume of neighborhoods of algebraic varieties from \cite{Basu:hausdorff}. Obtaining such volume bounds is a problem with a long history, see, e.g.,~\cite{Wongkew:tubular,Lotz:tubular,Basu:hausdorff} for a discussion. 
\begin{proposition} \label{PROP:largevolume}
Let $S(\mathbf g) 
\subseteq \R^n$.  
Assume that $S(\mg)$ is contained in a ball of radius $R \leq 2^{\poly(n)}$. 
Then there exists a constant \[r \geq \vol(S(\mg)) \cdot 2^{-\poly(n)}\] such that $S(\mg)$ contains a (translated) ball of radius~$r$. 
\end{proposition}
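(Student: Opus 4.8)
The plan is to reduce the non-convex case to a covering argument using the tube bound of Basu--Lorenzo from \cite{Basu:hausdorff}. The intuition is simple: if $S(\mg)$ contained \emph{no} ball of radius $r$, then every point of $S(\mg)$ would lie within distance $r$ of the boundary $\partial S(\mg)$, which is contained in the union of the real varieties $\{g_i = 0\}$. Since each $g_i$ has degree independent of $n$, the $r$-neighborhood of $\bigcup_i \{g_i = 0\}$ inside the ball $B(0,R)$ has volume at most $(m+1)$ times the volume of the $r$-tube around a single hypersurface, and the tube bound gives a quantity of the form $r \cdot R^{n-1} \cdot C(n, d)$, where $C(n,d)$ depends polynomially (or at worst singly-exponentially) on $n$ for fixed degree $d$. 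Setting this less than $\vol(S(\mg))$ forces $r \geq \vol(S(\mg)) \cdot 2^{-\poly(n)}$, which is exactly the claimed bound.

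Concretely, I would proceed as follows. First, fix $z \in \R^n$ maximizing (or nearly maximizing) the distance $\dist(z, \partial S(\mg))$ over $z \in S(\mg)$, and let $r$ be that distance; then $B(z,r) \subseteq S(\mg)$ by definition, so it suffices to lower-bound $r$. Second, observe that $S(\mg) \setminus B(z, r)$ is covered by the $r$-neighborhood $V_r := \{x \in B(0,R) : \dist(x, Z) \leq r\}$ of the variety $Z := \bigcup_{i=1}^m \{x : g_i(x) = 0\}$: indeed, any point of $S(\mg)$ not within $r$ of $Z$ would itself be the center of a ball of radius $r$ contained in $S(\mg)$, contradicting maximality of $r$ (after possibly relabelling $z$). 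Hence $\vol(S(\mg)) \leq \vol(B(z,r)) + \vol(V_r) \leq \vol(B(0,1)) r^n + \vol(V_r)$. Third, apply the volume bound from \cite{Basu:hausdorff} to each hypersurface $\{g_i = 0\}$: for $r$ small (say $r \le R$), the volume of the $r$-tube around $\{g_i=0\}\cap B(0,R)$ is at most $r \cdot 2^{\poly(n)}$ uniformly in $i$, since the $g_i$ have fixed degree and $R \le 2^{\poly(n)}$; summing over $i \in [m]$ and using $m \le \poly(n)$ gives $\vol(V_r) \le r \cdot 2^{\poly(n)}$. Finally, combine: $\vol(S(\mg)) \le \vol(B(0,1)) r^n + r \cdot 2^{\poly(n)} \le r \cdot 2^{\poly(n)}$ (absorbing the first term, which is dominated once $r \le 1$), and rearrange to obtain $r \ge \vol(S(\mg)) \cdot 2^{-\poly(n)}$, as desired. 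The small edge cases ($r > R$, or $S(\mg)$ empty) are trivial or vacuous.

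I expect the main obstacle to be invoking the tube-volume estimate of \cite{Basu:hausdorff} in exactly the right quantitative form: one needs a bound on $\vol(\{x \in B(0,R) : \dist(x, \{g=0\}) \le r\})$ that is \emph{linear in $r$} with a prefactor that is $2^{\poly(n)}$ for polynomials $g$ of degree bounded independently of $n$, and one has to be slightly careful that the estimates there are stated for neighborhoods of varieties (or their real points / Hausdorff-measure versions) rather than, say, for the measure of a sublevel set $\{|g| \le \eps\}$, which would require an additional lower bound on $\|\nabla g\|$ near the zero set. Assuming the bound is available in the neighborhood-of-variety form (which is the content of \cite{Basu:hausdorff}), the rest of the argument is the elementary covering computation sketched above. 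A secondary technical point is ensuring the scaling in $R$ is handled correctly — rescaling $B(0,R)$ to $B(0,1)$ multiplies $r$ by $1/R$ and the tube volume by $R^{n-1}/R^{\,\dim} $ appropriately — but since $R \le 2^{\poly(n)}$ and $n$ is the ambient dimension, any power of $R$ is still $2^{\poly(n)}$ and is absorbed harmlessly into the final constant.
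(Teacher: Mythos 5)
Your proposal is correct and follows essentially the same route as the paper: bound the volume of the $r$-neighborhood of the zero set containing $\partial S(\mg)$ via the tube estimate of \cite{Basu:hausdorff} (which is indeed linear in $r$ for $r\le 1$ with a $2^{\poly(n)}$ prefactor), and compare with $\vol(S(\mg))$ to find a point at distance greater than $r$ from the boundary. The only cosmetic difference is that the paper applies the tube bound once to the single variety $V(\prod_i g_i)$ rather than summing over the varieties $\{g_i=0\}$.
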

\begin{proof} 
Let $G = \prod_{g \in \mg} g$. First observe that the boundary of $S(\mg)$ is contained in the variety $V(G) = \{\mx \in \R^n: G(\mx) = 0\}$. 
Now consider the $\delta$-neighborhood of $V(G)$, i.e., the set of $\mx \in \R^n$ such that $\dist(\mx,V(G)) \leq \delta$. In \cite[Theorem~3.2]{Basu:hausdorff}, building on \cite{Lotz:tubular}, it is shown that for all positive $\delta$ we have
\[
\frac{\vol(V(G)+B(0,\delta))}{\vol(B(0,R))} \leq 4 \sum_{i=1}^{n} \binom{n}{i} \left(\frac{4D\delta}{R}\right)^{i} \left(1+\frac{\delta}{R}\right)^{n-i}
\]
where $D = \prod_{g \in \mg} \deg(g)$.\footnote{Theorem~3.2 of \cite{Basu:hausdorff} applies to more general algebraic varieties $V$, here we use the special case where $V$ is the zero set of a single polynomial and $\mathrm{dim}_\R(V) = n-1$.} 
In particular, this shows that 
\[
\vol\left(V(G)+B(0,\delta)\right) \leq \sum_{j=1}^n  c_j \delta^j
\]
where the coefficients $c_j \in \R$ satisfy $0< c_j \leq 2^{\poly(n)}$. For $\delta \leq 1$, the upper bound is of the form $\delta \cdot 2^{p(n)}$ for some polynomial $p$, and hence for $\delta < \vol(S(\mg))/2^{p(n)}$ 
we have 
\[
\vol(\partial S(\mg) + B(0,\delta)) \leq \vol(V(G)+B(0,\delta)) < \vol(S(\mg)),
\]
which implies that there exists an $\mx \in S(\mg)$ that has distance greater than $\delta$ to the boundary of $S(\mg)$. The ball $B(\mx, r)$ with center $\mx$ and radius $r = \delta \geq \vol(S(\mg))/2^{\poly(n)}$ is thus contained in $S(\mg)$. 
\end{proof}

\subsection{A counterexample}
To end this section, we show that the second condition of Theorem~\ref{THM:FULLDIM} is not superfluous by exhibiting a full-dimensional semialgebraic set which does not contain a hypercube $[-r, r]^n$ of size $r \geq 2^{-\poly(n)}$. As a direct result, it cannot contain a ball of radius $r \geq 2^{-\poly(n)}$, either. We use a simple repeated-squaring argument. 
\begin{proposition} \label{PROP:smallcube}
There exists a full-dimensional semialgebraic set $S(\mathbf{g})$, defined only by polynomial inequalities $\mathbf{g} = (g_1, g_2, \dots, g_m)$ of fixed degree, whose coefficients have constant bit-complexity, 
which does not contain a (translated) cube $[-r, r]^n$ for $r \geq 2^{-\mathrm{poly}(n)}$.
\end{proposition}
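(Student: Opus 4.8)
The plan is to construct $S(\mathbf g)$ by nesting a repeated-squaring gadget into $n$ coordinates so that membership forces at least one coordinate to be exponentially small. The basic building block is the observation that if $y_0 = x$ and $y_{k+1} = y_k^2$, then $y_k = x^{2^k}$, and this chain of $k$ equalities can be encoded by $2k$ polynomial inequalities $y_{k+1} - y_k^2 \ge 0$ and $y_k^2 - y_{k+1} \ge 0$, each of fixed degree and with constant (indeed $\{0,\pm 1\}$) coefficients. Concretely, I would introduce auxiliary variables so that the total number of variables is some $N = \poly(n)$, use $O(n)$ such chains of length $O(n)$, and impose a single "spread" inequality that is satisfiable only if one of the outputs $x^{2^{\Theta(n)}}$ is bounded away from $0$ and another is bounded away from $1$ (or some similar incompatibility), forcing the box $[-r,r]^N$ centered anywhere to fail unless $r$ is doubly-small. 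Since $[-r,r]^N \subseteq S(\mathbf g)$ would in particular have to contain a translate sitting inside the coordinate-$x$ window where the squaring dynamics are well-defined, and since on an interval of length $r$ around any point the value $x^{2^k}$ varies by a factor like $(1+r/|x|)^{2^k}$, one gets that $r$ must be at most $2^{-2^{\Omega(n)}}$, which is in particular smaller than $2^{-\poly(n)}$.

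The key steps, in order, would be: (i) fix the gadget — define the variables $x, y_1, \dots, y_n$ (one chain suffices if done carefully, but I may use a few) and the inequalities $\pm(y_{k} - y_{k-1}^2) \ge 0$ together with a localization like $0 \le x \le 1$ so that the $y_k$ are genuinely forced to equal $x^{2^k}$ on the feasible set; (ii) add a constraint that separates two far-apart powers, e.g. an inequality forcing $y_n \ge 1/4$ together with, on a different chain or the same one, forcing some intermediate or related quantity to be $\le$ something like $1 - c$, engineered so the feasible region is nonempty (full-dimensional) yet "thin"; (iii) verify full-dimensionality by exhibiting an explicit open box-like region inside $S(\mathbf g)$ — this is where one must be slightly careful, since the equalities-encoded-as-pairs-of-inequalities make the set look like a curve, so the trick is to replace the exact squaring constraint $y_k = y_{k-1}^2$ by a two-sided \emph{band} $y_{k-1}^2 - \epsilon_k \le y_k \le y_{k-1}^2 + \epsilon_k$ for a fixed constant $\epsilon_k$, which thickens the set to full dimension while still propagating exponentially small width down the chain; (iv) do the quantitative estimate showing that any axis-aligned cube $[-r,r]^N + z$ inside $S(\mathbf g)$ must have $r \le 2^{-\poly(n)}$, by tracking how a width-$2r$ interval in the first coordinate blows up (or, in the band version, how the admissible band for $y_n$ has width doubly-exponentially small, so the $y_n$-extent of the cube is doubly-exponentially small, forcing $2r$ to be so too).

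The main obstacle I anticipate is reconciling \textbf{full-dimensionality} with the \textbf{doubly-exponential thinness}: a naive repeated-squaring gadget using exact equalities gives a one-dimensional set, which trivially contains no cube but is not full-dimensional, so it does not meet the hypothesis of the proposition (and hence would not show the second condition of Theorem~\ref{THM:FULLDIM} is non-superfluous in the claimed sense). The resolution — loosening each squaring equality to a band of constant width — must be done so that (a) the set stays full-dimensional (easy: the bands have nonempty interior and their intersection does too, one shows this by a direct point-plus-small-box argument), (b) the bands still force the last coordinate's feasible range to shrink geometrically per level, so after $\Theta(n)$ levels the $y_n$-window has length $\le 2^{-c \cdot 2^{n}}$ or at least $\le 2^{-\poly(n)}$ — whichever the statement needs (the statement only needs "not $\ge 2^{-\poly(n)}$", so even a single-exponential bound suffices and makes the band analysis easier), and (c) all constraint polynomials remain of fixed degree with constant-bit-complexity coefficients, which is automatic since each is of the form $\pm(y_k - y_{k-1}^2) + (\text{constant})$ or a linear box constraint. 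Once the band widths are chosen as fixed rationals (say $\epsilon_k = 1/4$ for all $k$, or a fixed geometric-looking choice independent of $n$), steps (iii) and (iv) are short computations, and the degree/bit-complexity bookkeeping in step (i) is immediate.
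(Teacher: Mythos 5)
Your overall idea --- a repeated-squaring chain forcing one coordinate to be doubly-exponentially small --- is exactly the mechanism the paper uses, and you correctly identify the central tension between full-dimensionality and thinness. However, your proposed resolution of that tension is where the argument breaks. Replacing each exact squaring constraint by a two-sided band $y_{k-1}^2 - \epsilon_k \le y_k \le y_{k-1}^2 + \epsilon_k$ with \emph{constant} $\epsilon_k$ (e.g.\ $\epsilon_k = 1/4$ for all $k$) destroys the propagation of smallness entirely: at every level the feasible window for $y_k$ has width $2\epsilon_k$ regardless of how constrained $y_{k-1}$ is, so nothing shrinks down the chain. Concretely, with $\epsilon_k = 1/4$ the set $\{0 \le x \le 1,\ |y_k - y_{k-1}^2| \le 1/4\}$ contains the cube where all coordinates lie in $[0,1/10]$, so it contains a cube of constant side length and the proposition fails for that construction. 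Making the $\epsilon_k$ shrink doubly-exponentially would restore thinness but would violate the constant-bit-complexity requirement on the coefficients. Your ``spread'' constraint at the end of the chain does not rescue this, because with loose bands the terminal constraint no longer forces anything about the early coordinates.

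The paper's fix is different and is the key idea you are missing: use \emph{one-sided} constraints, letting the nonnegativity constraint supply the other side. Take $S(\mathbf g) = \{\mx : \mx_i \ge 0,\ \mx_i \le \mx_{i+1}^2\ (1 \le i \le n-1),\ \mx_n \le 1/2\}$, with no auxiliary variables. Given $\mx_{i+1}$, the feasible interval for $\mx_i$ is $[0, \mx_{i+1}^2]$, whose \emph{length} is $\mx_{i+1}^2$; so the cap $\mx_n \le 1/2$ forces $0 \le \mx_1 \le 2^{-2^{n-1}}$ for every feasible point, which rules out any cube of side $2^{-\poly(n)}$. Full-dimensionality comes for free because the region below a parabola is full-dimensional (any point with all inequalities strict, e.g.\ $\mx_n = 1/4$ and $\mx_i = \mx_{i+1}^2/2$, has a neighborhood in $S(\mathbf g)$) --- no thickening by bands is needed. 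All constraints have degree $2$ and coefficients in $\{0,\pm 1, 1/2\}$, so the bookkeeping is immediate. As written, your proposal does not yield a correct proof without replacing the band gadget by this one-sided mechanism.
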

\begin{proof}
Let $S(\mathbf{g})$ be the set defined by the system of inequalities:
\begin{align*}
    \mx_i &\geq 0 \quad\quad(1 \leq i \leq n), \\
    \mx_i - \mx_{i+1}^2 &\leq 0 \quad\quad(1 \leq i \leq n-1),\\
    \mx_n &\leq 1/2.
\end{align*}
Set~$r := 2^{-2^{{n-1}}}$. Then~$[0,r]^n \subseteq S(\mathbf{g})$ and so $S(\mathbf{g})$ is full-dimensional. But from the inequalities it follows that~$0 \leq \mx_1 \leq r$ for any $\mx \in S(\mathbf{g})$, meaning $S(\mathbf{g})$ cannot contain a cube of size at least $2^{-\mathrm{poly}(n)}$. 
\end{proof}

\section{Preliminaries} \label{SEC:Prelim}
\subsection{Notation}
We denote by $\R\cx$ the space of $n$-variate polynomials. For $d \in \N$, we write $\R\cx_d$ for the subspace of polynomials of degree at most~$d$, whose dimension is equal to $h(n,d) := \binom{n + d}{d}$. It has a basis of monomials $\mx^\alpha = \mx_1^{\alpha_1} \ldots \mx_n^{\alpha_n}$, with $\alpha \in \N_d^n := {\{\alpha \in \N^n : \|\alpha\|_1 \leq d\}}$. We denote by $\mathcal S^n$ the space of $n$-by-$n$ real symmetric matrices. We have the trace inner product $\langle X, Y \rangle = \mathrm{Tr}(XY)$ on $\mathcal S^n$, which induces the Frobenius norm $\|X\|_F = \sqrt{\langle X, X \rangle}$.

\subsection{Complexity of semidefinite programming}
We will make use of the following result on the complexity of semidefinite programming.
Let $C \in \Q^{n \times n}$, $A_i \in \Q^{n \times n}$, $b_i \in \Q$, $1 \leq i \leq m$ be rational input data. Consider the following SDP in standard form:
\begin{equation}
\begin{split} \label{EQ:SDP}
      \mathrm{val} = \inf ~&\langle C, X \rangle \\
      &\langle A_i, X \rangle = b_i \quad(1 \leq i \leq m), \\
      &X \in \mathcal{S}^n \text{ is positive semidefinite.}
\end{split}\tag{SDP} 
\end{equation}
We denote the feasible region of \cref{EQ:SDP} by:
\[
    \mathcal{F} := \{ X \in \mathcal{S}^n : X \succeq 0, \langle A_i, X \rangle = b_i \quad (1 \leq i \leq m) \}.
\]
One can show, for example using the ellipsoid method~\cite{schrijveretal:ellipsoid}, that under certain assumptions \cref{EQ:SDP} can be solved in polynomial time. We use the following explicit formulation from \cite{deKlerkVallentin:SDPcomplexity} where a similar result was shown using an interior point method.

\begin{theorem}[Thm.~1.1 in \cite{deKlerkVallentin:SDPcomplexity}]\label{THM: deKlerkVallentin}
Let $r, R > 0$ be given and suppose that there exists an $X_0 \in \mathcal{F}$ so that:
\[
    B(X_0, r) \subseteq \mathcal{F} \subseteq B(X_0, R),
\]
where $B(X_0, \lambda)$ is the ball of radius $\lambda \in \R$ (in the norm $\| \cdot \|_F$) centered at $X_0$ in the subspace:
\[
    V = V(\mathcal{F}) := \{ X \in \mathcal{S}^n : \langle A_i, X \rangle = b_i \quad (1 \leq i \leq m) \}. 
\]
Then for any rational $\varepsilon > 0$ one can find a rational matrix $X^* \in \mathcal{F}$ such that:
\[
    \langle C, X^* \rangle - \mathrm{val} \leq \varepsilon,
\]
in time polynomial in $n, m, \log (R / r), \log(1/\varepsilon)$ and the bit-complexity of the input data $C, A_i, b_i$ and the feasible point $X_0$.
\end{theorem}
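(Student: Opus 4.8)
The plan is to reduce the statement to the classical fact that a linear function can be optimized over a well-bounded convex body presented by a separation oracle, using the (rounded) ellipsoid method, and then to supply an \emph{exact} rational separation oracle for the feasible set $\mathcal F$. Fix $X_0$ and compute, by fraction-free Gaussian elimination on the $A_i$, a rational basis $N_1,\dots,N_N$ of the linear space $V_0 := \{X \in \mathcal S^n : \langle A_i,X\rangle = 0,\ 1\le i\le m\}$; here $N = \dim V_0 \le \binom{n+1}{2}$ and the $N_j$ have bit-complexity polynomial in that of the $A_i$. Writing $X = X_0 + \sum_{j} y_j N_j$ identifies $V$ with $\R^N$ and $\mathcal F$ with a convex body $K \subseteq \R^N$, and turns the objective into an affine function $\gamma_0 + c^\top y$ with $c_j = \langle C, N_j\rangle \in \Q$ of polynomial bit-complexity. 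Under this identification the Frobenius metric on $V$ becomes the quadratic form $y \mapsto y^\top G y$ with $G_{jk} = \langle N_j, N_k\rangle$; since $G$ is rational of polynomial bit-size, its eigenvalues lie in $[2^{-\poly},\,2^{\poly}]$, so the hypothesis $B(X_0,r)\subseteq \mathcal F \subseteq B(X_0,R)$ becomes a Euclidean sandwich $B(0, r') \subseteq K \subseteq B(0,R')$ with $\log(R'/r') = \log(R/r) + \poly(n, \text{bit-complexity of the }A_i)$. It therefore suffices to optimize $c^\top y$ over $K$ to additive error $\varepsilon$ in time polynomial in $N$, $\log(R'/r')$, $\log(1/\varepsilon)$ and the relevant bit-complexities.

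For the separation oracle: given rational $y$ of controlled bit-size we form $Y = X_0 + \sum_j y_j N_j \in \mathcal S^n$ and decide whether $Y \succeq 0$ \emph{exactly}. Since $Y$ is symmetric all its eigenvalues are real, so the number of negative eigenvalues of $Y$ can be read off in polynomial time from the signs of the coefficients of its characteristic polynomial (Descartes' rule of signs is exact when all roots are real), or via a Sturm sequence; the characteristic polynomial is computable in polynomial time with coefficients of polynomial bit-size. If $Y \not\succeq 0$, an $LDL^\top$ factorization with symmetric diagonal pivoting produces, in polynomial time, a rational vector $v$ with $v^\top Y v < 0$; then every $Z \in \mathcal F$ satisfies $\langle vv^\top, Z\rangle = v^\top Z v\ge 0$ while $\langle vv^\top, Y\rangle < 0$, and restricting the functional $Z\mapsto\langle vv^\top, Z\rangle$ to $V$ (i.e.\ taking $y \mapsto \langle vv^\top, X_0\rangle + \sum_j y_j \langle vv^\top, N_j\rangle$) gives a rational hyperplane separating $y$ from $K$. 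This is a strong, polynomial-time, exactly-correct separation oracle; in particular every ``feasible'' verdict it returns is genuine.

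Finally, run the rounded central-cut ellipsoid method on $K$ started from $B(0,R')$, using this oracle together with the sliding-objective trick: whenever the current center $y$ is declared feasible, record it and additionally cut with the halfspace $\{y' : c^\top y' \le c^\top y\}$. The inner radius $r'$ guarantees that $K$ always retains volume at least $\vol(B(0,r'))$, so after $O\!\big(N^2(\log(R'/r') + \log(1/\varepsilon) + \log\|c\| + \log N)\big)$ iterations the circumscribed ellipsoid has shrunk enough that the best recorded feasible center $y^\star$ satisfies $c^\top y^\star \le \min_{y\in K} c^\top y + \varepsilon$; each iteration costs one oracle call plus $O(N^2)$ arithmetic operations on rationals of polynomial bit-size, the bit-length control being exactly the content of the rounded-ellipsoid analysis of Gr\"otschel--Lov\'asz--Schrijver. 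Setting $X^\star = X_0 + \sum_j y_j^\star N_j$ then yields a rational matrix in $\mathcal F$ of polynomial bit-complexity with $\langle C, X^\star\rangle - \mathrm{val} \le \varepsilon$, within the claimed time bound. The main obstacle is precisely that one cannot substitute an only-approximate eigenvalue routine here: to output a genuinely feasible rational $X^\star$ one needs the exact rational semidefiniteness test and the exact rational separating vector above, and one must carry through the classical but delicate bookkeeping that keeps every ellipsoid center and every intermediate quantity of polynomial bit-length; the metric distortion incurred by using a rational rather than an orthonormal basis of $V$ is harmless only because $\log\operatorname{cond}(G)$ is polynomial. (De Klerk--Vallentin instead run a short-step primal--dual interior-point method, where the same parameters $r$ and $R$ bound the number of Newton iterations and the precision to which each linear system must be solved; either route yields the stated conclusion.)
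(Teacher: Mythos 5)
This statement is not proved in the paper at all: it is imported verbatim as Theorem~1.1 of de Klerk--Vallentin, who establish it by a short-step primal--dual interior-point method in the Turing model, and the paper merely remarks in passing that the ellipsoid method of Gr\"otschel--Lov\'asz--Schrijver gives a similar result. Your proposal is therefore a genuinely different route from the cited source, but it is the standard and correct one, and you execute the reduction cleanly: parametrizing the affine space $V$ by a rational basis of the homogeneous solution space, controlling the metric distortion through the condition number of the Gram matrix $G$, supplying an \emph{exact} rational PSD test and separating vector (via the characteristic polynomial or an $LDL^\top$ factorization with symmetric pivoting), and running the rounded central-cut ellipsoid method with the sliding objective. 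You correctly identify the two points where a naive argument would fail --- the need for exact rather than approximate semidefiniteness testing so that the returned $X^*$ is genuinely in $\mathcal F$, and the bit-length bookkeeping of the rounded ellipsoid analysis --- and you correctly delegate the latter to GLS. The trade-off between the two routes is as you say: the interior-point proof needs $r$ and $R$ to initialize the self-dual embedding and to bound both the iteration count and the precision of each Newton system, whereas the ellipsoid proof needs them only for the volume argument but requires the exact separation oracle. One small caveat: as written, your ellipsoid argument (like the IPM one) actually incurs a polynomial dependence on $\log R'$ and $\log(1/r')$ \emph{separately} through the bit-sizes of the initial ellipsoid and of the iterates, not only on the ratio $\log(R/r)$ appearing in the statement; this is immaterial for every application in the paper, where $R$ and $1/r$ are both bounded by $2^{\mathrm{poly}(n)}$, but it is worth being precise about if you intend the proof to match the quoted parameter list exactly.
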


\subsection{Dual formulation and moments of measures} \label{sec:L}
It will be convenient to work with the dual formulation of~\cref{EQ:introductionSOS}, which we recall is (see, e.g., \cite{deKlerkLaurent:survey}): 
\begin{equation}
\begin{alignedat}{2} \label{EQ:LMOM}
      \mathrm{mom}(f)_t := \inf ~& L(f)  \\
      \mathrm{s.t. }~&L(1) = 1, \\
      ~&L(g_i p^2) \geq 0,  ~~&&(g_i p^2 \in \R[\mx]_{2t}) \\
      &L(h_j \mx^\alpha) = 0, ~~   &&(h_j\mx^\alpha \in \R[\mx]_{2t}) \\
      &L \in \R[\mx]^*_{2t}.
\end{alignedat}
\tag{MOM}
\end{equation}
Assuming that \cref{EQ:POP} is explicitly bounded, these formulations are equivalent.
\begin{theorem}[{\cite[Sec.~2.6.3, Ex.~2.12]{BenTalNemirovski}, see also \cite{JoszHenrion:duality}}]
If \cref{EQ:POP} is explicitly bounded\footnote{If we assume instead that $S(\mg,\mh)$ is Archimedean with non-empty interior, then it is known that $\sost = \momt$ for $t$ large enough~\cite[Theorem~4.2]{Lasserre:seminal}.}, we have strong duality between the primal and dual formulations \cref{EQ:introductionSOS} and \cref{EQ:LMOM} of the moment-SOS hierarchy. That is, we then have:
\[
    \mathrm{sos}(f)_t = \mathrm{mom}(f)_t \quad \forall t \geq \lceil \deg(f)/2 \rceil.
\]
\end{theorem}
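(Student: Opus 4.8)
The plan is to use the standard fact (see, e.g., \cite{deKlerkLaurent:survey}) that \cref{EQ:LMOM} and \cref{EQ:introductionSOS} are a primal--dual pair of semidefinite programs -- the former minimising $L(f)$ over the moment and localising matrices $M_t(L)\succeq 0$, $M_t(g_iL)\succeq 0$ subject to the linear constraints $L(1)=1$ and $L(h_j\mx^\alpha)=0$, the latter maximising $\lambda$ over Gram matrices for the $\sigma_i$ and free coefficient vectors for the $p_j$ with $f-\lambda=\sum_ig_i\sigma_i+\sum_jh_jp_j$ of degree $2t$ -- and to deduce $\mathrm{sos}(f)_t=\mathrm{mom}(f)_t$ from conic strong duality. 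Weak duality $\mathrm{sos}(f)_t\le\mathrm{mom}(f)_t$ is immediate from $L(f)-\lambda=L(f-\lambda)\ge0$. By the strong duality theorem for semidefinite programs it then suffices to show that the maximisation side \cref{EQ:introductionSOS} is \emph{strictly feasible}, i.e.\ admits a degree-$2t$ representation $f-\lambda=\sum_ig_i\sigma_i+\sum_jh_jp_j$ in which every $\sigma_i$ has a positive \emph{definite} Gram matrix. This is the single place where explicit boundedness is used.

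To build such a point, let $M_t$ be the degree-$2t$ truncated quadratic module of $(\mg,\mh)$, a convex cone in $\R[\mx]_{2t}$; it suffices to find $\lambda$ with $f-\lambda\in\mathrm{int}\,M_t$, since the interior slack can then be spread over the $\sigma_i$ to make all Gram matrices positive definite. Let $v$ denote the vector of monomials of degree at most $t$ and set $w_t:=\sum_{|\alpha|\le t}\mx^{2\alpha}=\langle I,vv^T\rangle$. Since $w_t$ is the image of the interior point $I$ of the PSD cone under the surjective linear map $Q\mapsto\langle Q,vv^T\rangle$, it lies in the interior of the cone of degree-$2t$ sums of squares, hence a fortiori in $\mathrm{int}\,M_t$. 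Now apply the remark after the definition of explicit boundedness -- that \cref{EQ:introductionSOS} is feasible at every level $t\ge\lceil\deg/2\rceil$ under explicit boundedness -- to the polynomial $f-w_t$ (legitimate since $\deg(f-w_t)\le 2t$): this gives $\lambda_1\in\R$ with $f-w_t-\lambda_1\in M_t$, and therefore
$f-\lambda_1=(f-w_t-\lambda_1)+w_t\in M_t+\mathrm{int}\,M_t\subseteq\mathrm{int}\,M_t$, since $M_t$ is a convex cone.

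With strict feasibility in hand, the SDP strong duality theorem -- applied with \cref{EQ:introductionSOS} in the role of the strictly feasible maximisation problem -- yields $\mathrm{sos}(f)_t=\mathrm{mom}(f)_t$ together with attainment of the infimum in \cref{EQ:LMOM}; the only degenerate case, in which \cref{EQ:LMOM} is infeasible and $\mathrm{mom}(f)_t=+\infty$, is settled by noting that strict feasibility then forces \cref{EQ:introductionSOS} to be unbounded, so $\mathrm{sos}(f)_t=+\infty$ as well. The step I expect to be the main obstacle is the strict feasibility itself: this is the finite-level ``inner ball'' alluded to in the introduction, it genuinely needs the ball constraint $g_1=R^2-\|\mx\|_2^2$ (for a merely Archimedean description one gets strict feasibility, and hence the equality, only once $t$ is large enough), and it ultimately rests on the Putinar-type consequence of explicit boundedness that every polynomial of degree $\le 2t$ lands in $M_t$ after subtracting a sufficiently large constant. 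An alternative route, bypassing SDP duality theorems, is to prove directly that explicit boundedness makes $M_t$ \emph{closed}, so that $M_t=(M_t^*)^*$ and one may interchange the supremum over $\lambda$ with the infimum over $L\in M_t^*$; that closedness is then the delicate ingredient.
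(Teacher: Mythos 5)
Your argument is correct, but it is genuinely different from the route the paper takes: the paper does not prove this theorem itself, it cites \cite{BenTalNemirovski,JoszHenrion:duality}, and the argument there runs through the \emph{moment} side. Concretely, the ball constraint forces $|L(\mx^\alpha)|\le R^{|\alpha|}$ for every feasible $L$ (this is \cref{LEM:Lupbound}, stated immediately after the theorem precisely for this reason), so the feasible region of \cref{EQ:LMOM} is compact; a nonempty bounded feasible set for a conic program implies strict feasibility of its dual (this is the content of the cited exercise in Ben-Tal--Nemirovski), and the conic duality theorem then closes the gap. You instead establish strict feasibility of \cref{EQ:introductionSOS} \emph{directly}, by exhibiting $f-\lambda$ in the interior of the truncated quadratic module: $w_t=\sum_{|\alpha|\le t}\mx^{2\alpha}$ is interior to the degree-$2t$ SOS cone (image of $I\succ0$ under an open surjective linear map), and explicit boundedness supplies $\lambda_1$ with $f-w_t-\lambda_1\in M_t$ via the Remark after the definition (whose proof is essentially \cref{lem:powers in M}, and which indeed applies verbatim to any polynomial of degree at most $2t$, not just $f$). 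Both routes use explicit boundedness in an essential and, pleasingly, dual way: the citation uses it to bound pseudo-moments, you use it to show every degree-$\le 2t$ polynomial enters $M_t$ after subtracting a constant. What your version buys is self-containedness (everything needed is already in the paper) and an explicit interior point rather than an existence argument; what the cited version buys is that it simultaneously yields compactness of the moment feasible region, which the paper needs anyway as the ``outer ball'' for \cref{THM: deKlerkVallentin}. Two steps deserve one more line each if you write this out in full: (i) passing from $f-\lambda\in\mathrm{int}(M_t)$ to strict feasibility of the \emph{SDP} formulation is not tautological because the Gram parametrization is not injective -- the clean fix is to note $f-\lambda-\eps\sum_{i=0}^m g_i\big(\sum_{|\alpha|\le t_i}\mx^{2\alpha}\big)\in M_t$ for small $\eps$ and then add $\eps I$ back to each Gram block; (ii) the degenerate case is handled correctly, though you could also just observe that under explicit boundedness with $S(\mg,\mh)\ne\emptyset$ the program \cref{EQ:LMOM} is always feasible (take $L=L_{\delta_x}$ for $x\in S(\mg,\mh)$).
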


As is well known (see e.g., \cite[Prop.~6.2]{Laurent:polopt}), explicit boundedness of $S(\mg , \mh)$ also gives a bound on the feasible region of~\cref{EQ:LMOM}. 
\begin{lemma} \label{LEM:Lupbound}
Assume that $S(\mg, \mh)$ is explicitly bounded for some $R>0$. Let $L \in \R[x]^*_{2t}$ be a feasible solution to~\cref{EQ:LMOM}. Then ${|L(\mx^\alpha)| \leq R^{|\alpha|}}$ for all $\alpha \in \N^n_{2t}$. 
\end{lemma}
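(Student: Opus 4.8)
The plan is to first bound the ``diagonal'' moments $L(\mx^{2\beta})$ for $|\beta|\le t$ by induction on $|\beta|$, and then deduce the general bound via a Cauchy--Schwarz argument coming from positive semidefiniteness of the moment matrix $M_t(L)$.

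I would begin by recording two consequences of the feasibility of $L$ for \cref{EQ:LMOM}. Since $1\in\mg$ by convention, $L(p^2)\ge0$ for all $p\in\R[\mx]_t$, so in particular $M_t(L)\succeq0$. Since \cref{EQ:POP} is explicitly bounded, $g_1(\mx)=R^2-\|\mx\|_2^2$ is one of the inequality constraints, so $L\big((R^2-\|\mx\|_2^2)p^2\big)\ge0$ whenever $\deg(p)\le t-1$, i.e.\ $L(\|\mx\|_2^2\,p^2)\le R^2\,L(p^2)$.

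Next, I claim $L(\mx^{2\beta})\le R^{2|\beta|}$ for every $\beta\in\N^n$ with $|\beta|\le t$, proved by induction on $d=|\beta|$. The base case $d=0$ is $L(1)=1$. For $d\ge1$, pick an index $i$ with $\beta_i\ge1$ and write $\beta=\beta'+e_i$, where $e_i$ is the $i$-th unit vector and $|\beta'|=d-1\le t-1$. Applying the inequality $L(\|\mx\|_2^2 p^2)\le R^2 L(p^2)$ with $p=\mx^{\beta'}$ (legitimate since $\deg p = d-1\le t-1$), and using that each $L(\mx_j^2\mx^{2\beta'})=L\big((\mx_j\mx^{\beta'})^2\big)\ge0$, gives
\[
L(\mx^{2\beta}) = L(\mx_i^2\,\mx^{2\beta'}) \le \sum_{j=1}^n L(\mx_j^2\,\mx^{2\beta'}) = L(\|\mx\|_2^2\,\mx^{2\beta'}) \le R^2\, L(\mx^{2\beta'}) \le R^2\cdot R^{2(d-1)} = R^{2d},
\]
where the final step is the induction hypothesis applied to $\beta'$.

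Finally, let $\alpha\in\N^n_{2t}$ be arbitrary. Since $|\alpha|\le2t$, I can split $\alpha=\alpha'+\alpha''$ with $|\alpha'|=\lceil|\alpha|/2\rceil\le t$ and $|\alpha''|=\lfloor|\alpha|/2\rfloor\le t$. Nonnegativity of the $2\times2$ principal submatrix of $M_t(L)$ with rows and columns indexed by $\mx^{\alpha'}$ and $\mx^{\alpha''}$ yields the Cauchy--Schwarz inequality $L(\mx^{\alpha'+\alpha''})^2\le L(\mx^{2\alpha'})L(\mx^{2\alpha''})$, so by the previous step
\[
L(\mx^{\alpha})^2 \le L(\mx^{2\alpha'})\,L(\mx^{2\alpha''}) \le R^{2|\alpha'|}\,R^{2|\alpha''|} = R^{2|\alpha|},
\]
and taking square roots gives $|L(\mx^\alpha)|\le R^{|\alpha|}$. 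This argument is routine; the only care needed is in the degree bookkeeping — checking that $\deg(\mx^{\beta'})\le t-1$ so that $L\big((R^2-\|\mx\|_2^2)(\mx^{\beta'})^2\big)\ge0$ is indeed one of the constraints imposed in \cref{EQ:LMOM}, and that every $\alpha$ of degree at most $2t$ splits into two exponent vectors of degree at most $t$ — so there is no genuine obstacle.
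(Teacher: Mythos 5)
The paper states this lemma without proof, referring to it as well known and citing~\cite[Prop.~6.2]{Laurent:polopt}. Your argument is correct and is precisely the standard one: use the ball-localizing constraint $L\big((R^2-\|\mx\|_2^2)p^2\big)\ge 0$ together with $L(q^2)\ge 0$ to bound the even moments $L(\mx^{2\beta})$ by induction on $|\beta|$, then pass to an arbitrary $\alpha\in\N^n_{2t}$ via positive semidefiniteness of the $2\times 2$ principal submatrix of $M_t(L)$ indexed by a split $\alpha=\alpha'+\alpha''$ with $|\alpha'|,|\alpha''|\le t$; the degree bookkeeping ensuring these constraints are actually imposed in \cref{EQ:LMOM} is handled correctly.
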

There is a natural relation between the dual formulation~\cref{EQ:LMOM} and \emph{moments} of measures supported on $S(\mg, \mh)$, which clarifies the assumptions made in Theorem~\ref{THM:RW} and Theorem~\ref{PROP:main}. For a measure $\mu$ supported on $S(\mathbf{g}, \mathbf{h})$, the \emph{moment} of degree $\alpha \in \N^n$ is defined by:
\[
	m(\mu)_{\alpha} := \int_{S(\mathbf{g}, \mathbf{h})} \mx^\alpha d\mu(\mx).
\]
For $t \in \N$, the (truncated) moment matrix $M_t(\mu)$ of order $t$ for $\mu$ is the matrix of size $h(n,t) = \binom{n + t}{t}$ given by:
\begin{equation} \label{Trunc}
	M_t(\mu)_{\alpha, \beta} = m(\mu)_{\alpha + \beta} \quad (|\alpha|,|\beta| \leq t).
\end{equation}
Consider the linear functional $L_\mu \in \R[\mx]_{2t}^*$ defined by:
\begin{equation} \label{EQ:measurefunctional}
    L_{\mu}(p) := \int_{S(\mathbf{g}, \mathbf{h})} p(\mx) d\mu(x) \quad(p \in \R[\mx]_{2t}).
\end{equation}
For any constraint $g_i$ and $p \in \R[\mx]$ with $\mathrm{deg}(g_i p^2) \leq 2t$, we have:
\[
     L_{\mu}(g_ip^2) = \mathbf{p}^\top M_t(g_i\mu) \mathbf{p} = \int_{S(\mathbf{g}, \mathbf{h})} p^2(\mx) g_i(\mx) d\mu(\mx) \geq 0,
\]
where $\mathbf{p}$ denotes the vector of coefficients of $p \in \R[\mx]_{t}$ in the monomial basis. Here the $(\alpha,\beta)$-entry of the \emph{localizing matrix} $M_t(g_i \mu)$ is defined as $\int_{S(\mathbf{g},\mathbf{h})} g_i(\mx) \mx^{\alpha+\beta} d\mu(\mx)$. In particular, for each $i$ the matrix $M_t(g_i \mu)$ is positive semidefinite. 
Note that $M_t(\mu) = M_t(L_\mu)$ and similarly for the localizing matrices.
Furthermore, for any constraint $h_j$ and $\alpha \in \N^n$ with $\mathrm{deg}(\mx^\alpha h_j) \leq 2t$, we have:
\[
     L_{\mu}(h_j\mx^\alpha) = \int_{S(\mathbf{g}, \mathbf{h})} h_j(\mx) \mx^\alpha d\mu(\mx) = 0.
\]
If $\mu$ is a probability measure, we get $L_{\mu}(1) = 1$, and it follows that $L_{\mu}$ is a feasible solution to \cref{EQ:LMOM}.

\subsection{Standard form of the moment formulation: Proof of Proposition~\ref{PROP:main}}
\label{sec:SDPformulation}

In order to apply~\cref{THM: deKlerkVallentin} in our proof of~\cref{PROP:main} below, we need to express the formulation~\cref{EQ:LMOM} in the standard form~\cref{EQ:SDP}. Set $N = h(n,t) + \sum_{i \in [m]} h(n,t-\lceil \deg(g_i)/2\rceil)$. One can construct $A_1,\ldots,A_K \in \Q^{N \times N}$ and $b_1,\ldots, b_K \in \Q$ with entries either $-1$, $0$, $1$, or a coefficient of the polynomials $g_i,h_j$, such that 
the conditions $\langle A_j, X \rangle = b_j$ for $j \in [K]$ are equivalent to the statement that $X$ is 
a block-diagonal matrix $X = M_t(L) \oplus \big(\bigoplus_{i \in [m]} M_t(g_iL)\big)$ where 
\begin{alignat}{2}
    \label{EQ:LM}
    M_t(L)_{\alpha, \beta}       &= L(\mx^{\alpha + \beta}) \quad&&(|\alpha|,|\beta| \leq t), \\ 
    \label{EQ:LMg}
    M_t(g_iL)_{\alpha, \beta}  &= L(g_i(\mx) \mx^{\alpha+\beta}) \quad&&(|\alpha|,|\beta| \leq t -\lceil\deg(g_i)/2\rceil), 
\end{alignat}
for some linear functional $L \in \R\cx_{2t}^*$ satisfying $L(1) = 1$ and $L(h_j\mx^\alpha)=0$ for each $j \in [\ell]$ and $|\alpha| \leq 2t-\deg(h_j)$. Let $C_f \in \mathcal S^N$ be such that $\langle C_f, X\rangle = L(f)$, then we have 
\begin{equation}
\begin{split} \label{eq:MOMSDP}
      \mathrm{mom}(f)_t = \inf ~&\langle C_f, X \rangle  \\
      &\langle A_i, X \rangle = b_i \quad(1 \leq i \leq K), \\
      &X \in \mathcal{S}^N \text{ is positive semidefinite.}
\end{split}\tag{MOM-SDP} 
\end{equation}
To see the equivalence it suffices to observe that the conditions $L(g_i p^2) \geq 0$ for all $g_i$ and $p$ of appropriate degree of \cref{EQ:LMOM} are equivalent to $X \succeq 0$. 

The upshot is that we may apply \cref{THM: deKlerkVallentin} to the formulation~\cref{EQ:LMOM} in the following way.
\begin{proposition} \label{prop:L0 standard form}
Consider an instance of~\cref{EQ:LMOM} with feasible solution $L_0 \in \R[\mx]_{2t}^*$ that satisfies the property: $L_0(g_i p^2) = 0$ implies $g_i p^2 = \sum_{j=1}^\ell p_j h_j$ for some polynomials $p_j$ with $\deg(p_j) \leq 2t-\deg(h_j)$. 
Let $X_0 = M_t(L_0) \oplus \big(\bigoplus_{i \in [m]} M_t(g_iL_0)\big)$ be the matrix associated 
to $L_0$ via \cref{EQ:LM} and \cref{EQ:LMg}. If all non-zero eigenvalues of $X_0$ are at least $r>0$, then $B(X_0,r/2)$ is contained in the feasible region of \cref{eq:MOMSDP}.
\end{proposition}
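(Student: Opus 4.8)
The plan is to verify positive semidefiniteness of $X$, since every other requirement is automatic. Recall from the construction of \eqref{eq:MOMSDP} that the affine constraints $\langle A_j,X\rangle=b_j$ ($j\in[K]$) already force any $X$ in the affine subspace $V$ to be block-diagonal of the form $X = M_t(L)\oplus\big(\bigoplus_{i\in[m]}M_t(g_iL)\big)$ for some $L\in\R[\mx]_{2t}^*$ with $L(1)=1$ and $L(h_j\mx^\alpha)=0$ for every $h_j\mx^\alpha\in\R[\mx]_{2t}$. Hence it suffices to show that each block of $X$ is positive semidefinite whenever $\|X-X_0\|_F\le r/2$. It will be convenient to work inside the subspace $V' := \{X\in V : \Pi_0 X = 0\}$, where $\Pi_0$ denotes the orthogonal projection onto $\ker X_0$: I will show that $V'$ contains the full feasible region $\mathcal F$ of \eqref{eq:MOMSDP} and that $B(X_0,r/2)\cap V'\subseteq\mathcal F$, which supplies the inner ball needed when applying \cref{THM: deKlerkVallentin} (since $\mathcal F\subseteq V'$, optimizing over $V'$ still returns $\momt$).

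The crux is the inclusion $\ker X_0\subseteq\ker X$ for every $X\in\mathcal F$, and this is exactly where the degree-bounded ideal-membership hypothesis on $L_0$ is used. Fix a feasible $X = M_t(L)\oplus\big(\bigoplus_i M_t(g_iL)\big)$ and $v=(v_0,v_1,\dots,v_m)\in\ker X_0$ split along the blocks (with $g_0=1$), so that $v_i\in\ker M_t(g_iL_0)$ for each $i$; let $p_i$ be the polynomial with coefficient vector $v_i$. Since $L_0$ is feasible for \eqref{EQ:LMOM}, each $M_t(g_iL_0)$ is positive semidefinite, and hence $L_0(g_ip_i^2)=v_i^\top M_t(g_iL_0)v_i=0$. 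By hypothesis $g_ip_i^2=\sum_{j=1}^\ell p_{ij}h_j$ with $\deg(p_{ij})\le 2t-\deg(h_j)$, so $g_ip_i^2$ is a linear combination of monomials $\mx^\alpha h_j$ of degree at most $2t$; applying $L$ and using $L(h_j\mx^\alpha)=0$ for each such monomial yields $v_i^\top M_t(g_iL)v_i = L(g_ip_i^2)=0$. As $X\succeq 0$, its block $M_t(g_iL)$ is positive semidefinite, so $v_i^\top M_t(g_iL)v_i=0$ forces $M_t(g_iL)v_i=0$. Therefore $Xv=0$, which proves both the inclusion and $\mathcal F\subseteq V'$.

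Finally I run the perturbation estimate inside $V'$. Let $X\in V'$ with $\|X-X_0\|_F\le r/2$. From $\Pi_0 X=0$ and symmetry of $X$ we also get $X\Pi_0=0$, so $X=\Pi_0^\perp X\Pi_0^\perp$ is supported on $\mathrm{range}(X_0)$, and likewise $X_0=\Pi_0^\perp X_0\Pi_0^\perp$ with $X_0\succeq rI$ on $\mathrm{range}(X_0)$ by the eigenvalue hypothesis. Since $\|X-X_0\|_{\mathrm{op}}\le\|X-X_0\|_F\le r/2$, we get $X\succeq (r/2)I$ on $\mathrm{range}(X_0)$, while $X=0$ on $\ker X_0$; hence $X\succeq 0$, i.e., $X\in\mathcal F$. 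I expect the inclusion $\ker X_0\subseteq\ker X$ to be the main obstacle: the hypothesis alone only delivers $v_i^\top M_t(g_iL)v_i=0$, and it is positive semidefiniteness of the blocks of a feasible solution that promotes this to $M_t(g_iL)v_i=0$ — which is also why the argument lives most naturally on the reduced subspace $V'$, since in the presence of equality constraints $\mathcal F$ need not be full-dimensional in $V$.
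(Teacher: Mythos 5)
Your argument is correct, but it is not the paper's argument, and the difference is substantive. The paper takes an arbitrary $X$ in the ball inside the full affine space $V(\mathcal F)$, derives $v^\top M_t(g_iL)v = L(g_ip_v^2)=0$ for $v\in\ker M_t(g_iL_0)$ exactly as you do, and then concludes that $v$ is a zero-eigenvector of the block $M_t(g_iL)$ — an implication that requires positive semidefiniteness of that block, which at that stage is precisely what remains to be proved. You identify this issue explicitly and repair it: you establish $\ker X_0\subseteq\ker X$ only for \emph{feasible} $X$ (where PSD-ness of the blocks legitimizes the step from $v_i^\top M_t(g_iL)v_i=0$ to $M_t(g_iL)v_i=0$), conclude $\mathcal F\subseteq V'$, and then run the perturbation estimate inside $V'$, where the kernel condition holds by fiat. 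Both halves of your argument are correct, and the restriction to $V'$ is genuinely needed: the literal claim (ball taken in $V(\mathcal F)$) can fail. For instance, with $n=1$, $t=1$, $h_1=x^2$, $g_1=1-x^2$ and $L_0$ the evaluation at $0$, the hypothesis on $L_0$ holds, yet $V(\mathcal F)$ contains the matrices $\bigl(\begin{smallmatrix}1&a\\a&0\end{smallmatrix}\bigr)\oplus(1)$ for every $a$, which are arbitrarily close to $X_0$ in Frobenius norm and not positive semidefinite for $a\neq 0$.

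The one loose end is your final appeal to \cref{THM: deKlerkVallentin}. That theorem requires the inner ball to live in the affine space cut out by the \emph{explicit} constraints of the SDP, so to use the ball in $V'$ one must augment \cref{eq:MOMSDP} with linear constraints expressing $\Pi_0X=0$. This leaves the feasible region (and hence the optimal value) unchanged, since you showed $\mathcal F\subseteq V'$, but the new constraint matrices are built from a basis of $\ker X_0$, and the runtime guarantee of \cref{THM: deKlerkVallentin} depends on the bit-complexity of the constraint data. For the downstream use in \cref{PROP:main} one therefore still has to argue that such a basis can be chosen with bit-complexity $\poly(n)$ (e.g.\ when the entries of $X_0$ themselves have polynomial bit-complexity, as in the settings of \cref{THM:SUFFICIENT} and \cref{THM:FULLDIM}). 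Apart from this bookkeeping point, your proof is a correction of the paper's argument rather than a reproduction of it.
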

\begin{proof}
Let $X \in B(X_0,r/2)$ and write $X = X_0 + \tilde X$. Here $B(X_0,r/2)$ is the ball of radius $r/2$ (in the Frobenius norm) in the affine space $V(\mathcal F)$ defined by the linear equalities of \cref{eq:MOMSDP}.  We first show 
that a zero-eigenvector of $X_0$ is a zero-eigenvector of $X$. Since $X_0$ is block-diagonal, a zero-eigenvector of $X_0$ corresponds to a zero-eigenvector $v$ of a single block $M_t(g_i L_0)$. For such a vector $v$ we have 
\[
0=v^T M_t(g_i L_0) v = L_0(g_i(\mx) p_v(\mx)^2)
\]
where $p_v(\mx) = \sum_{|\alpha| \leq t-\lceil \deg(g_i)/2\rceil} v_\alpha \mx^\alpha$. By assumption on $L_0$, we therefore have 
$
g_i p_v^2 = \sum_{j=1}^\ell p_j h_j
$ 
for polynomials $p_j$ with $\deg(p_j) \leq 2t-\deg(h_j)$. 

To show that $v$ corresponds to a zero-eigenvector of $X$ it remains to observe that, since $X \in V(\mathcal F)$, 
it corresponds to a linear functional $L \in \R\cx_{2t}^*$ via \cref{EQ:LM} and \cref{EQ:LMg} with the property that $L(h_j p) = 0$ for all $j \in [m]$ and polynomials $p$ of degree at most $2t-\deg(h_j)$. 
In particular, 
\[
L(g_i(\mx) p_v(\mx)^2) = \sum_{j \in [\ell]} L(h_j p_j) = 0,
\]
and therefore $v$ corresponds to a zero-eigenvector of the $g_i$-th block of $X$. 

Finally, to show that $X$ is positive semidefinite, note that by assumption $X = X_0 + \tilde X$ where $\|\tilde X\|_F \leq r/2$. In particular, the eigenvalues of $\tilde X$ are all at most $r/2$. Since the kernel of $X_0$ is contained in the kernel of $X$, this means that all non-zero eigenvalues of $X$ are at least $r/2$ and hence $X$ is positive semidefinite. 
\end{proof}

As a corollary of \cref{prop:L0 standard form} we obtain \cref{PROP:main}.

\begin{proof}[Proof of~\cref{PROP:main}]
By assumption $L$ is such that for all $g \in \mg$ and $p \in \R\cx_{t-\lceil \deg(g)/2\rceil}$ we have $L(gp^2)=0$ implies $gp^2 = \sum_{j=1}^\ell p_j h_j$ for some polynomials $p_j$ with $\deg(p_j) \leq 2t - \deg(h_j)$. Moreover, all non-zero eigenvalues of $M_t(L) \oplus \big(\bigoplus_{i \in [m]} M_t(g_i L)\big)$ are at least $2^{-\poly(n)}$. By \cref{prop:L0 standard form}, the feasible region of \cref{eq:MOMSDP} contains a ball of radius $2^{-\poly(n)}$ (in $V(\mathcal{F})$), and by \cref{LEM:Lupbound} it is contained in a ball of radius $2^{\poly(n)}$ . \cref{THM: deKlerkVallentin} thus shows that we can compute an $\eps$-additive approximation to the value $\momt$ in time polynomial in $n$ and $\log(1/\eps)$. 
\end{proof}

\section{An algebraic condition for polynomial-time computability: Proof of Theorem~\ref{THM:SUFFICIENT}}

We split the proof of \cref{THM:SUFFICIENT} into two parts: we first consider the moment matrix $M_t(L)$ and then the localizing matrices $M_t(gL)$ for $g \in \mg$. A key tool is the following auxiliary lemma.
\begin{lemma}[Lemma 3.1 in \cite{RaghavendraWeitz:bitcomplexity}.] \label{LEM:integermatrix}
Let $M \in \Z^{N \times N}$ be a symmetric integer matrix with $|M_{ij}| \leq B$ for all $i, j \in [N]$. Then each
non-zero eigenvalue of M has absolute value at least $(BN)^{-N}$.
\end{lemma}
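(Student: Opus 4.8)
The plan is to combine the classical fact that the characteristic polynomial of an integer matrix has integer coefficients with a crude upper bound on the spectral radius of $M$.

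Concretely, I would pass to the characteristic polynomial $\chi_M(\lambda) = \det(\lambda I - M) \in \Z[\lambda]$, which is monic of degree $N$ with integer coefficients since $M \in \Z^{N\times N}$. Let $r = \rank(M)$. As $M$ is symmetric it is diagonalizable, so $0$ is a root of $\chi_M$ of multiplicity exactly $N-r$, and we may write $\chi_M(\lambda) = \lambda^{N-r} q(\lambda)$ with $q(\lambda) = \prod_{i:\lambda_i\neq 0}(\lambda-\lambda_i)$, where $\lambda_1,\dots,\lambda_N$ denote the eigenvalues of $M$. The constant term $c_0 := q(0) = (-1)^r\prod_{i:\lambda_i\neq 0}\lambda_i$ equals, up to sign, the coefficient of $\lambda^{N-r}$ in $\chi_M$, which in turn equals $\pm$ the sum of the $r\times r$ principal minors of $M$; in particular $c_0\in\Z$, and $c_0\neq 0$ whenever $M\neq 0$ (the case $M=0$ being vacuous). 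Hence $\prod_{i:\lambda_i\neq 0}|\lambda_i| = |c_0| \geq 1$.

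Next I would bound each eigenvalue from above. Since $M$ is symmetric, its spectral radius equals its operator norm, so for every eigenvalue $\lambda_i$ we have $|\lambda_i| \leq \|M\|_{\mathrm{op}} \leq \|M\|_F = \bigl(\sum_{i,j}M_{ij}^2\bigr)^{1/2} \leq \sqrt{N^2B^2} = BN$ (alternatively, Gershgorin's disc theorem gives $|\lambda_i|\leq \max_i\sum_j|M_{ij}|\leq NB$). Letting $\lambda^\ast$ be a nonzero eigenvalue of smallest absolute value and combining the two estimates,
\[
1 \leq |c_0| = \prod_{i:\lambda_i\neq 0}|\lambda_i| \leq |\lambda^\ast|\cdot(BN)^{r-1} \leq |\lambda^\ast|\cdot(BN)^{N-1},
\]
so that $|\lambda^\ast| \geq (BN)^{-(N-1)} \geq (BN)^{-N}$ (we may assume $B\geq 1$, for otherwise $M=0$ and there is nothing to prove, so $BN\geq 1$). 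This is exactly the claimed bound.

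I do not anticipate a genuine obstacle here; the only points requiring a little care are the justification that the product of the nonzero eigenvalues is an integer (cleanest via the principal-minor description of the coefficients of $\chi_M$, which avoids an explicit polynomial-division argument for $q$) and the treatment of the degenerate cases $B=0$ or $r=0$.
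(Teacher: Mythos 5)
Your proof is correct. Note that the paper itself states this lemma without proof, citing Lemma~3.1 of Raghavendra \& Weitz; your argument (integrality of the relevant coefficient of the characteristic polynomial, so the product of the nonzero eigenvalues is a nonzero integer, combined with the bound $|\lambda_i|\le BN$ from the Frobenius norm or Gershgorin) is exactly the standard one given in that reference, and your handling of the degenerate cases $M=0$ and $B<1$ is fine.
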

Using this lemma, we are able to show the following.
\begin{proposition} \label{PROP:momentmatrixeigs}
Assume that $\Sgh \subseteq B(0,R)$ is explicitely bounded with $R \leq 2^{\poly(n)}$, and let $L \in \R[\mathbf{x}]^*_{2t}$ be a feasible solution to~\cref{EQ:LMOM}.
Assume further that $L(\mathbf{x}^\alpha) \in \Q$ has polynomial bit-complexity for all $\alpha \in \N^n_{2t}$.
Then $\lambda_{\min}(M_t(L))$, the smallest non-zero eigenvalue of the moment matrix $M_t(L)$ of~\cref{EQ:LM}, satisfies:
\[
    \lambda_{\min}(M_t(L)) \geq 2^{-\poly(n)}.
\]
\end{proposition}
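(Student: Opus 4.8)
The plan is to reduce the statement to an application of \cref{LEM:integermatrix}. The obstacle is that $M_t(L)$ is a \emph{rational} matrix, not an integer one, and moreover its entries $L(\mx^\alpha)$ need not be small --- by \cref{LEM:Lupbound} they are bounded by $R^{|\alpha|} \leq 2^{\poly(n)}$, but they could \emph{a priori} have large denominators even if their bit-complexity is controlled. The hypothesis that $L(\mx^\alpha) \in \Q$ has polynomial bit-complexity for every $\alpha \in \N^n_{2t}$ is exactly what lets us get around this: writing each entry as $p_\alpha/q_\alpha$ in lowest terms with $|p_\alpha|, q_\alpha \leq 2^{\poly(n)}$, the common denominator $Q := \mathrm{lcm}\{q_\alpha : \alpha \in \N^n_{2t}\}$ satisfies $Q \leq \prod_\alpha q_\alpha \leq 2^{\poly(n)}$, since there are only $h(n,2t) = \binom{n+2t}{2t} = \poly(n)$ indices (recall $t$ is fixed) and each factor is at most $2^{\poly(n)}$.

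First I would set $\tilde M := Q \cdot M_t(L)$. This is a symmetric \emph{integer} matrix of size $N := h(n,t) = \poly(n)$, with entries $\tilde M_{\alpha,\beta} = Q \cdot L(\mx^{\alpha+\beta})$, so $|\tilde M_{\alpha,\beta}| \leq Q \cdot R^{2t} =: B \leq 2^{\poly(n)}$. Applying \cref{LEM:integermatrix} to $\tilde M$ gives that every non-zero eigenvalue of $\tilde M$ has absolute value at least $(BN)^{-N} \geq 2^{-\poly(n)}$ (again using that $N, B \leq 2^{\poly(n)}$ and $N = \poly(n)$, so $(BN)^{-N} = 2^{-N\log(BN)} = 2^{-\poly(n)}$). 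Since $M_t(L) = Q^{-1}\tilde M$ with $Q > 0$, the eigenvalues of $M_t(L)$ are exactly those of $\tilde M$ scaled by $Q^{-1}$; in particular $M_t(L)$ and $\tilde M$ have the same kernel, and
\[
    \lambda_{\min}(M_t(L)) \;=\; Q^{-1}\,\lambda_{\min}(\tilde M) \;\geq\; Q^{-1} (BN)^{-N} \;\geq\; 2^{-\poly(n)},
\]
where $\lambda_{\min}(\cdot)$ denotes the smallest \emph{non-zero} eigenvalue. This proves the claim for $M_t(L)$.

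The only step requiring care is the bound $Q \leq 2^{\poly(n)}$: this genuinely uses both that the number of distinct monomial entries is polynomial in $n$ (which holds because $t = O(1)$, so $h(n,2t) = \poly(n)$) and that each denominator $q_\alpha$ is individually at most $2^{\poly(n)}$ (from the bit-complexity hypothesis on $L$). I do not expect any difficulty beyond bookkeeping here. The companion statement for the localizing matrices $M_t(gL)$, $g \in \mg$, will be handled separately in the next part of the proof of \cref{THM:SUFFICIENT}; the same strategy applies, with the extra point that the entries $L(g(\mx)\mx^{\alpha+\beta})$ are $\Q$-linear combinations of the $L(\mx^\gamma)$ with coefficients the (polynomial bit-complexity) coefficients of $g$, so they too are rationals of polynomial bit-complexity and one can clear a common denominator of size $2^{\poly(n)}$ before invoking \cref{LEM:integermatrix}.
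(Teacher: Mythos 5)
Your proposal is correct and follows essentially the same route as the paper: clear a common denominator $Q \leq 2^{\poly(n)}$ (the paper simply asserts the existence of such an integer $C$ from the bit-complexity hypothesis, whereas you spell out the lcm construction), bound the entries via \cref{LEM:Lupbound}, note the matrix size $h(n,t) = \poly(n)$ since $t$ is fixed, and invoke \cref{LEM:integermatrix}. Your explicit handling of the rescaling factor $Q^{-1}$ when passing eigenvalues back from $\tilde M$ to $M_t(L)$ is a minor point the paper glosses over, but both arguments are the same.
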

\begin{proof}
By assumption, there exists an integer $C \leq 2^{\poly(n)}$ such that $C \cdot M_t(L)$ is an integer matrix. Furthermore, using \cref{LEM:Lupbound}, the largest entry of $C \cdot M_t(L)$ in absolute value is bounded from above by $C \cdot R^{2t} \leq 2^{\poly(n)}$.
Recall that the matrix $M_t(L)$ is indexed by the monomials of degree at most~$t$, and is thus of size $h(n,t) \coloneqq\binom{n + t }{t}$. As $t \in \N$ is fixed, we have $h(n,t) \leq \poly(n)$.
We may therefore use \cref{LEM:integermatrix} to conclude that the smallest non-zero eigenvalue of $M_t(L)$ is at least
\begin{equation} \label{EQ:momentmatrixeigs}
      \lambda_{\min}(M_t(L)) \geq (2^{\poly(n)} \cdot h(n,t))^{-h(n,t)} \geq 2^{-\poly(n)},
\end{equation}
as required.

\end{proof}
\begin{proposition} \label{PROP:localizingmatrixeigs}
Under the same assumptions as in \cref{PROP:momentmatrixeigs}, let $g \in \R\cx$ be one of the constraints defining $S(\mathbf{g}, \mathbf{h})$. Then the smallest non-zero eigenvalue of the localizing matrix $M_t(gL)$ of~\cref{EQ:LMg} satisfies 
    $\lambda_{\min}(M_t(gL)) \geq 2^{-\poly(n)}$.
\end{proposition}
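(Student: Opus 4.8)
The plan is to follow the proof of \cref{PROP:momentmatrixeigs} essentially verbatim, the only new point being to control how multiplication by the fixed-degree polynomial $g$ affects the bit-complexity and the magnitude of the matrix entries. Write $g = \sum_\delta c_\delta \mx^\delta$ in the monomial basis. Since $\deg(g)$ is a constant independent of $n$ and the coefficients $c_\delta$ have polynomial bit-complexity by assumption, there are only $\poly(n)$ nonzero terms, each with $|c_\delta| \leq 2^{\poly(n)}$. For $|\alpha|,|\beta| \leq t - \lceil \deg(g)/2\rceil$ we have $\deg(g\mx^{\alpha+\beta}) \leq 2t$, so the entry
\[
    M_t(gL)_{\alpha,\beta} = L(g\mx^{\alpha+\beta}) = \sum_\delta c_\delta\, L(\mx^{\alpha+\beta+\delta})
\]
is well-defined. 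As each $L(\mx^\gamma) \in \Q$ has polynomial bit-complexity, this finite $\Q$-linear combination is again a rational number of polynomial bit-complexity, so there is a common integer denominator $C \leq 2^{\poly(n)}$ for which $C \cdot M_t(gL)$ is an integer matrix.

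Next I would bound the entries in absolute value. By \cref{LEM:Lupbound} (applicable since $L$ is feasible for \cref{EQ:LMOM} and $\Sgh$ is explicitly bounded) we have $|L(\mx^\gamma)| \leq R^{|\gamma|} \leq 2^{\poly(n)}$ for all $\gamma \in \N^n_{2t}$, hence
\[
    |M_t(gL)_{\alpha,\beta}| \leq \sum_\delta |c_\delta|\, R^{|\alpha+\beta+\delta|} \leq 2^{\poly(n)},
\]
so $C \cdot M_t(gL)$ is a symmetric integer matrix with entries of absolute value at most $2^{\poly(n)}$. Its size is $h(n, t-\lceil \deg(g)/2\rceil) \leq h(n,t) = \binom{n+t}{t} \leq \poly(n)$ since $t$ is fixed.

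Finally, applying \cref{LEM:integermatrix} to $C \cdot M_t(gL)$ shows that every nonzero eigenvalue of it has absolute value at least $(2^{\poly(n)} \cdot \poly(n))^{-\poly(n)} = 2^{-\poly(n)}$; dividing by $C \leq 2^{\poly(n)}$ gives $\lambda_{\min}(M_t(gL)) \geq 2^{-\poly(n)}$, as claimed. Note that positive semidefiniteness of $M_t(gL)$ for $g \in \mg$ follows from feasibility of $L$, but it is not needed, since \cref{LEM:integermatrix} applies to arbitrary symmetric integer matrices. The only step requiring any care — and it is mild — is the first one: one must verify that multiplication by $g$ inflates neither the common denominator nor the entries beyond $2^{\poly(n)}$, which is precisely where the hypotheses that $\deg(g)$ is constant and that the coefficients of $g$ have polynomial bit-complexity are used.
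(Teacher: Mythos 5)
Your proof is correct and follows essentially the same route as the paper's: clear denominators of $g$ and of the moments $L(\mx^\gamma)$ to get an integer matrix with entries bounded by $2^{\poly(n)}$, then apply \cref{LEM:integermatrix}. The only cosmetic difference is that the paper keeps the two denominators $C_g$ and $C$ separate while you merge them into one, and you are slightly more explicit about rescaling the eigenvalue bound back by $C$ at the end.
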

\begin{proof}
We may express $g$ in the monomial basis as:
\[
    g(\mathbf{x}) = \sum_{|\alpha| \leq d} g_\alpha \mathbf{x}^\alpha \quad (g_\alpha \in \Q). 
\]
The coefficients $g_\alpha$ have polynomial bit-complexity by assumption, and so $|g_\alpha| \leq 2^{\poly(n)}$ for each $|\alpha| \leq d$. For the same reason, there exists an integer $C_g \leq 2^{\poly(n)}$ such that $C_g g_\alpha \in \Z$ for all $|\alpha| \leq d$. Recall that the entries of $M_t(gL)$ are linear combinations of the entries of $M_t(L)$, namely for $|\alpha|,|\beta| \leq t -\lceil\deg(g_i)/2\rceil$ the $(\alpha,\beta)$-th entry is of the form:
\[
    M_t(gL)_{\alpha, \beta} = \sum_{|\gamma| \leq \deg(g_i)}  g_\gamma L(\mx^{\alpha+\beta + \gamma}). 
\]
Now, as in  \cref{PROP:momentmatrixeigs}, we see that $C_g \cdot C \cdot M_t(gL)$ is an integer matrix for some integer $C \leq 2^{\poly(n)}$. Furthermore, since the entries of $M_t(L)$ are at most $2^{\poly(n)}$, the entries of $M_t(gL)$ are bounded from above by 
\[
    2^{\poly(n)} \cdot \sum_{|\alpha| \leq d} |g_\alpha| \leq 2^{\poly(n)} \cdot \binom{n + d}{d} \cdot C_g \leq 2^{\poly(n)}.
\]
As before, we may thus invoke \cref{LEM:integermatrix} to conclude the proof.
\end{proof}

\section{A geometric condition for polynomial-time computability: Proof of Theorem~\ref{THM:FULLDIM}}
\label{SEC:GEOM}
Recall that we consider in \cref{THM:FULLDIM} an explicitly bounded semialgebraic set $S(\mathbf{g}) \subseteq \R^n$ with the additional geometric assumption:
\begin{equation} \label{EQ:Ballinclusion}
    B(z, r) \subseteq S(\mathbf{g})
\end{equation}  
for some $r \geq 2^{-\poly(n)}$ and $z \in \R^n$. To prove \cref{THM:FULLDIM}, we will exploit this assumption to exhibit a feasible solution $L \in \R[\mx]_{2t}^*$ to~\cref{EQ:LMOM} that satisfies the conditions of~\cref{THM:SUFFICIENT}.

We begin by noting that the inclusion \cref{EQ:Ballinclusion} implies that $S(\mg)$ contains a translated hypercube $B := [-r, r]^n + z$, for a (slightly smaller) $r \geq 2^{-\poly(n)}$.
We consider the probability measure $\mu_z$ obtained by restricting the Lebesgue measure to $B$ and renormalizing. We show that the operator $L_{\mu_z} \in \R[\mx]_{2t}^*$ associated to $\mu_z$ via~\cref{EQ:measureoperator} satisfies the conditions of \cref{PROP:main}. Let us first note that condition~(1) is satisfied automatically as $B$ is full-dimensional. Indeed, this means that $\int_{S(\mathbf{g})} g p^2 d\mu = 0$ if and only if $gp^2 = 0$. It remains to show that condition~(2) also holds, for which we use \cref{THM:SUFFICIENT}.

For simplicity, we assume first that $z = 0$, so that $B = [-r, r]^n$. In this case, we may use an explicit formula for the moments of $\mu_0$ (see, e.g., \cite{deKlerkLaurent:survey}) to find:
\begin{equation*} 
     L_{\mu_0}(\mx^\alpha) = \int_{[-r,r]^n} \mx^\alpha d\mu_0(\mx) = \begin{cases} \prod_{i=1}^n \frac{r^{\alpha_i}}{\alpha_i + 1} \, &\text{if } \alpha \in (2\N)^n, \\ 0 &\text{otherwise}. \end{cases}
\end{equation*}
As $r \geq 2^{-\poly(n)}$, it follows that:
\begin{equation} \label{EQ:mu0bc}
    \mathrm{bitcomplexity}(L_{\mu_0}(\mx^\alpha)) \leq \poly(n, t)
\end{equation}
for all $|\alpha| \leq 2t$. As $t$ is fixed, $L_{\mu_0}(\mx^\alpha)$ thus has polynomial bit complexity in $n$. That is, $L_{\mu_0}$ satisfies the condition of \cref{THM:SUFFICIENT}.

It remains to consider the case $z \neq 0$. Since $S(\mathbf{g})$ is explicitly bounded, we must have $\|z\|_2 \leq R \leq 2^{\poly(n)}$. After possibly choosing a slightly smaller $r$, we may assume that $z \in \Q^n$ and that~$z$ has polynomial bit-complexity in $n$. 
First, note for all $|\alpha| \leq 2t$ that
\begin{align*}
   L_{\mu_z}(\mx^\alpha) &= \int_{[-r,r]^n + z} \mx^\alpha d\mu_z(\mx) = \int_{[-r,r]^n} (\mx - z)^\alpha d\mu_0(\mx) \\&= L_{\mu_0}((\mx-z)^\alpha).
\end{align*}
Second, note that the coefficients $c_\gamma$ in the expansion of $(\mx - z)^\alpha$ in the monomial basis all have polynomial bit-complexity. The nonzero coefficients are those for which $\gamma$ is entrywise less than or equal to $\alpha$, which we denote $\gamma \leq \alpha$. Note that there are at most $h(n, t) := {n+t \choose t}$ such coefficients. We may thus use~\cref{EQ:mu0bc} to find that 
\begin{equation} \label{EQ:muzbc}
  L_{\mu_z}(\mx^\alpha) = L_{\mu_0}((\mx-z)^\alpha) = \sum_{\gamma \leq \alpha} c_\gamma \cdot L_{\mu_0}(\mx^\gamma)
\end{equation}
has polynomial bit-complexity for all $\alpha \in \N^n$ with $|\alpha| \leq 2t$ (as $t$ is fixed). It follows that $L_{\mu_z}$ satisfies the condition of \cref{THM:SUFFICIENT}. 

\section{From moments to sums of squares: proof of Theorem~\ref{THM:CONNECTION}}

\cref{PROP:main,THM:FULLDIM} show that, under their respective conditions, we can find an additive $\eps$-approximation to $\mathrm{mom}(f)_t  = \mathrm{sos}(f)_t$ in time $\poly(n,\log(1/\eps))$ by solving \cref{eq:MOMSDP}. We now show that, under these conditions, we have a compact sum-of-squares proof for this bound as well.
\begin{theorem}[Detailed version of \cref{THM:CONNECTION}] \label{THM:CONNECTION-formal}
Let $S(\mathbf{g}, \mathbf{h})$ be a semialgebraic set and suppose that the conditions of Theorem~\ref{PROP:main} or Theorem~\ref{THM:FULLDIM} are satisfied for some fixed $t \geq \lceil \deg(f)/2\rceil$. Suppose that $f -\lambda$ has a sum-of-squares decomposition:
\begin{equation} \label{eq:sos decomp}
    f(\mx) -\lambda = \sigma_0(\mx) + \sum_{i=1}^m g_i(\mx) \sigma_i(\mx)  + \sum_{j=1}^\ell h_j(\mx) p_j(\mx),
\end{equation}
where $\deg(h_j p_j) \leq 2t$ and the sum-of-squares are of the form $\sigma_i = \sum_{k=1}^{K} s_{i,k}^2$ for 
$s_{i,k} \in \R\cx$ with $\deg(g_i s_{i,k}^2) \leq 2t$.
Then, for a fixed $\eps>0$, there exists a polynomial $\err(\mx)$ of degree $2t$ such that 
\begin{equation} \label{eq:rounded cert}
    f(\mx) -\lambda +\err(\mx) = \tilde \sigma_0(\mx) + \sum_{i=1}^m g_i(\mx) \tilde \sigma_i(\mx)  + \sum_{j=1}^\ell h_j(\mx) \tilde p_j(\mx) 
\end{equation}
where, for $i = 0,\ldots,m$, we have $\tilde \sigma_i(\mx) = \sum_{k=1}^K \tilde s_{i,k}^2$ for polynomials $\tilde s_{i,k}$ with bit-complexity $\poly(n,\log(1/\eps))$ and $\|\err\|_1 \leq \eps R^{2t} 2^{\poly(n)}$. In particular, this proves nonnegativity of $f(\mx) -\lambda + R^{2t} \|\err\|_1$ on $\Sgh$.
\end{theorem}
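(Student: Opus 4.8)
The plan is to round the moment/SOS certificate via a dual argument, exploiting the fact that the conditions of Theorem~\ref{PROP:main} or Theorem~\ref{THM:FULLDIM} guarantee a well-conditioned feasible point $L_0$ of~\cref{eq:MOMSDP}: all non-zero eigenvalues of the block matrix $X_0 = M_t(L_0)\oplus\bigl(\bigoplus_{i\in[m]}M_t(g_iL_0)\bigr)$ are at least $r\geq 2^{-\poly(n)}$, and by Theorem~\ref{THM:FULLDIM}'s construction (or the hypotheses of Proposition~\ref{PROP:main}) $L_0$ itself has entries of bit-complexity $\poly(n)$. Here is the sequence of steps I would carry out.

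\emph{Step 1: reduce to a linear system.} Given the exact decomposition~\cref{eq:sos decomp}, collect the Gram matrices $Q_i\succeq 0$ of the $\sigma_i$ (so that $\sigma_i(\mx) = [\mx]_{d_i}^\top Q_i [\mx]_{d_i}$ with $d_i = \lfloor(2t-\deg g_i)/2\rfloor$) and the coefficient vectors $\mathbf{p}_j$ of the $p_j$. Matching coefficients in~\cref{eq:sos decomp} is an affine condition $\mathcal A(Q_0,\dots,Q_m,\mathbf p_1,\dots,\mathbf p_\ell) = \mathbf f - \lambda\mathbf e_0$ on these data, where $\mathcal A$ has entries among $\{-1,0,1\}$ and coefficients of $g_i,h_j$, hence bit-complexity $\poly(n)$. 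The point is to produce a \emph{rational} solution of this system whose denominators (and numerators) are controlled, at the cost of perturbing the right-hand side by a small $\err$.

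\emph{Step 2: use the well-conditioned $L_0$ to round.} The block matrix $X_0$ from $L_0$ lies in the feasible region of~\cref{eq:MOMSDP} and, crucially, its non-zero spectrum is bounded below by $r$. I would round the Gram data $(Q_i,\mathbf p_j)$ to rationals $(\tilde Q_i,\tilde{\mathbf p}_j)$ of bit-complexity $\poly(n,\log(1/\eps))$ and then \emph{repair} positive semidefiniteness by adding a small multiple of the projector onto the column space of $X_0$: concretely, replace $\tilde Q_i$ by $\tilde Q_i + \theta\,\Pi_i$ where $\Pi_i$ is (a rational approximation of) the projection onto $\operatorname{range}(M_t(g_iL_0))$ and $\theta = 2^{-\poly(n)}$ is slightly larger than the rounding error. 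Because the kernel of $X_0$ is, by the first condition of Proposition~\ref{PROP:main} (automatic in the full-dimensional setting of Theorem~\ref{THM:FULLDIM}), exactly the span of vectors $v$ with $g_ip_v^2 = \sum_j p_jh_j$, any perturbation supported on $\operatorname{range}(X_0)$ changes the identity~\cref{eq:sos decomp} only by a polynomial lying in the ideal $\langle\mathbf h\rangle$, which we absorb into the $\tilde p_j$'s; meanwhile the component of the rounding error lying in $\ker X_0$ is likewise absorbed into $\langle\mathbf h\rangle$. This is the step where I expect the real work: one must show that after rounding and adding $\theta\Pi_i$, the residual in the coefficient-matching equation can be written as $\sum_j h_j\tilde p_j + \err$ with $\|\err\|_1 \leq \eps R^{2t}2^{\poly(n)}$ and $\tilde p_j$ of polynomial bit-complexity — this uses that the orthogonal complement of $\ker X_0$ decomposes the moment/coefficient space compatibly with the ideal structure, together with the explicit bound $\|L_0\|\le R^{2t}$ from~\cref{LEM:Lupbound}.

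\emph{Step 3: bound $\err$ and conclude.} Writing $\tilde\sigma_i(\mx) = \sum_k \tilde s_{i,k}^2$ from a Cholesky-type factorization of the (now rational, PSD, well-conditioned) $\tilde Q_i$ — whose entries therefore have bit-complexity $\poly(n,\log(1/\eps))$ since the smallest non-zero eigenvalue is $\ge\theta = 2^{-\poly(n)}$ — and collecting $\err(\mx)$ as the difference of the two sides of~\cref{eq:rounded cert}, the coefficient-wise bound from Step~2 gives $\|\err\|_1 \le \eps R^{2t}2^{\poly(n)}$. Finally, for any $\mx\in\Sgh$ we have $|\mx^\alpha|\le R^{|\alpha|}\le R^{2t}$ for $|\alpha|\le 2t$, so $|\err(\mx)| \le \|\err\|_1 R^{2t}$, whence $f(\mx)-\lambda+R^{2t}\|\err\|_1 \ge f(\mx)-\lambda+\err(\mx) = \tilde\sigma_0(\mx)+\sum_i g_i(\mx)\tilde\sigma_i(\mx)+\sum_j h_j(\mx)\tilde p_j(\mx) \ge 0$ on $\Sgh$, as claimed. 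The main obstacle, as indicated, is Step~2: controlling how the PSD-repair perturbation and the rounding residual interact with the equality ideal $\langle\mathbf h\rangle$ so that everything outside $\operatorname{range}(X_0)$ is genuinely absorbed into the $\tilde p_j$ without blowing up their bit-complexity.
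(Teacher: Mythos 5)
Your proposal has a genuine gap at the start of Step~2, and it is precisely the central difficulty of the theorem. You propose to ``round the Gram data $(Q_i,\mathbf p_j)$ to rationals of bit-complexity $\poly(n,\log(1/\eps))$,'' but the decomposition \cref{eq:sos decomp} is only \emph{assumed to exist}: nothing a priori bounds the magnitude of its multipliers, and indeed O'Donnell's example shows they can be doubly exponential in $n$. You cannot round a number of unbounded size to one of polynomial bit-complexity while incurring only an $\eps$-sized error. The real content of the theorem is an a priori \emph{upper bound} on the multipliers, and the paper obtains it exactly where your sketch is silent: apply the well-conditioned functional $L$ to both sides of \cref{eq:sos decomp}; since every term on the right is nonnegative under $L$, each $L(g_i s_{i,k}^2)\le L(f-\lambda)\le R^{2t}\cdot\poly(\cdot)$ by \cref{LEM:Lupbound}; if $L(g_i s_{i,k}^2)\neq 0$ then $L(g_i s_{i,k}^2)\ge\lambda_{\min}(M_t(g_iL))\,\|s_{i,k}\|_2^2$, giving $\|s_{i,k}\|_2^2\le R^{2t}\lambda_{\min}(M_t(g_iL))^{-1}\poly(n)\le 2^{\poly(n)}$; and if $L(g_i s_{i,k}^2)=0$ then condition~(1) of \cref{PROP:main} lets you move $g_is_{i,k}^2$ into the ideal part. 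Only after this bound does rounding make sense. Relatedly, your plan leaves the bit-complexity of the $\tilde p_j$ to an unspecified ``compatibility of the ideal structure with $\ker X_0$''; the paper instead bounds the $p_j$ directly by viewing \cref{eq:sos decomp} as a linear system $A\mathbf p=\mathbf b$ in the coefficients of the $p_j$ (with $\mathbf b$ controlled by the already-bounded $\sigma_i$) and applying Cramer's rule together with \cref{LEM:integermatrix} to an invertible submatrix of $A$. Your PSD-repair-by-projector device is then unnecessary: the paper rounds the $s_{i,k}$ themselves (not the Gram matrices), so $\tilde\sigma_i=\sum_k\tilde s_{i,k}^2$ is automatically a sum of squares and no positive-semidefiniteness needs restoring.

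A secondary point: your final step establishes only \emph{pointwise} nonnegativity of $f-\lambda+R^{2t}\|\err\|_1$ on $\Sgh$ via $|\err(\mx)|\le\|\err\|_1R^{2t}$. The paper's conclusion is stronger in spirit — it produces a degree-$2t$ certificate by showing $R^{2t}\|\err\|_1-\err(\mx)\in\mathcal M(\mathbf g)_{2t}$, which requires the separate inductive argument of \cref{lem:powers in M} expressing $R^{|\alpha|}-\mx^\alpha$ in the quadratic module. If you intend the word ``proves'' in the statement to mean an SOS proof (as the paper does), this lemma or an equivalent is needed and is absent from your sketch.
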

From the SDP-formulation of \cref{EQ:introductionSOS} it follows that one can bound $K$ by the rank of the matrices involved, i.e., we have $K \leq \binom{n+t}{t}$ and thus $K \leq \poly(n)$.

\begin{proof}[Proof of \cref{THM:CONNECTION-formal}]
Let $\sigma_i = \sum_{k=1}^K s_{i,k}^2$ and $p_j$ as in \cref{eq:sos decomp} be given. We first show that the coefficients of the $s_{i,k}$ are upper bounded. 
To do so, let $L$ be a linear functional that satisfies the 
conditions of \cref{PROP:main} (in the proof of \cref{THM:FULLDIM} we also construct such an $L$). 
We then have 
   $ L(f-\lambda) = L(\sigma_0) + \sum_{i=1}^m L(g_i \sigma_i)$, and since all terms on the right-hand side are nonnegative, this implies in particular that (using Lemma~\ref{LEM:Lupbound}):
\begin{align*} 
    L(g_i s_{i,k}^2) &\leq L(f-\lambda) \leq R^{2t} \cdot \mathrm{poly}(\mathrm{bitcomplexity}(f)),
\end{align*}
for all $0 \leq i \leq m$ and $1 \leq k \leq K$, 
where we set $g_0 = 1$ for convenience. We can now distinguish two cases: (i) $L(g_i s_{i,k}^2) \neq 0$, or (ii) $L(g_i s_{i,k}^2) =0$. In the first case, if $L(g_i s_{i,k}^2) \neq 0$, then we also have 
\[
    L(g_i s_{i,k}^2) \geq \lambda_{\min}(M_t(g_i L)) \cdot \| s_{i,k} \|_2^2,
\]
where $\lambda_{\min}(M_t(g_i L))$ is the smallest non-zero eigenvalue of $M_t(g_i L)$,
and so
\[
\| s_{i,k} \|_2^2 \leq R^{2t} \cdot \lambda_{\min}(M_t( g_i L))^{-1} \cdot \mathrm{poly}(\mathrm{bitcomplexity}(f)).
\]
For the second case, if on the other hand $L(g_i s_{i,k}^2) = 0$, then by condition (1) of \cref{PROP:main} we have $g_i s_{i,k}^2 = \sum_{j =1}^{\ell} h_j q_j$ for some polynomials $q_j$. We may thus remove such $s_{i,k}$ from the sum-of-squares part of \cref{eq:sos decomp} and add them to the ideal part of the certificate. 

Using the above bound on the coefficients of the $s_{i,k}$, we now show a bound on the size of the coefficients of the $p_j$. The polynomial identity \cref{eq:sos decomp} allows us to view the coefficients of the $p_j$ as the solution of a linear system $A \mathbf p = \mathbf b$ where $\mathbf p$ is a vector that contains the coefficients of the $p_j$ ($\mathbf p$ is at most $\ell \binom{n+2t}{2t}$-dimensional), $A$ contains coefficients of the $h_j$, and $\mathbf b$ is the $\binom{n+2t}{2t}$-dimensional vector that contains the coefficients of $f(\mx)-\lambda - \sum_{i=0}^m g_i(\mx) \sigma_i(\mx)$. The system $A \mathbf p = \mathbf b$ is feasible, $\mathbf b \neq 0$, and therefore $r = \rank(A)$ is strictly positive. Let $\overline A$ be an invertible $r$-by-$r$ submatrix of $A$ and write $\overline{\mathbf p}$ and $\overline{\mathbf b}$ for the restrictions of $\mathbf p$ and $\mathbf b$ to the corresponding rows/columns. Cramer's rule then shows that the $i$th coordinate of $\overline{\mathbf p}$ can be written as 
\[
\overline{\mathbf p}_i = \frac{\det(\overline A_i)}{\det(\overline A)}
\]
where $\overline A_i$ is the matrix formed by replacing the $i$th column of $\overline A$ with the vector $\mathbf \overline b$. To upper bound $|\overline{\mathbf p}_i|$ we must give a lower bound on $|\det(\overline A)|$ and an upper bound on $|\det(\overline A_i)|$.
Let us first observe that $|\det(\overline A)| \geq 2^{-\poly(n)}$. 
Indeed, $\overline A$ is an invertible $r$-by-$r$ matrix with $r \in \poly(n)$, its entries have bit-complexity $\poly(n)$ since they correspond to coefficients of the $h_j$, and therefore applying \cref{LEM:integermatrix} to a suitable integer multiple of $\overline A$ shows 
that $|\det(\overline A)| \geq 2^{-\poly(n)}$. To upper bound $|\det(\overline A_i)|$ it suffices to observe that all entries of $\overline A_i$ are upper bounded in absolute value by $2^{\poly(n)}$: for the $i$th column this follows from the above-derived bound on the $\sigma_i$, for the other columns, as before, we observe that they contain coefficients of the $h_j$. 
By setting all remaining coordinates to zero, we can extend $\overline{\mathbf p}$ to a feasible solution $\mathbf p$ of $A \mathbf p = \mathbf b$. 
To summarize, this shows that there exists a sum-of-squares decomposition of $f-\lambda$ as in \cref{eq:sos decomp} where $\|s_{i,k}\|_2^2 \leq R^{2t} \lambda_{\min}(M_t( g_i L))^{-1} \cdot \mathrm{poly}(n)$ for all $i,k$ and $\|p_j\|_\infty \leq 2^{\poly(n)}$ for all $j$.

We finally show that rounding each coefficient of this certificate to few bits introduces a small error. For each $i \in [m], j \in [K]$, let $\tilde s_{i,k}$ be the polynomial $s_{i,k}$ with each of the coefficients rounded to the nearest integer multiple of $\eps$. Therefore, for $|\alpha|\leq t$ we have $|(s_{i,k})_{\alpha} - (\tilde s_{i,k})_{\alpha}| \leq \eps$.  Using the identity 
$\tilde s_{i,k}^2 - s_{i,k}^2 = {(\tilde s_{i,k} + s_{i,k}) (\tilde s_{i,k}-s_{i,k})}$,
this shows $|(\tilde s_{i,k}^2 - s_{i,k}^2)_\alpha| \leq \eps (2(s_{i,k})_{\alpha}+\eps)$ and thus ${\|\tilde s_{i,k}^2 - s_{i,k}^2\|_1} \leq \eps(2\|s_{i,k}\|_1 + \eps\binom{n+2t}{2t})$. Similarly, for each $j \in [\ell]$, let $\tilde p_j$ be the polynomial $p_j$ with each of the coefficients rounded to the nearest integer multiple of $\eps$. Then we have \cref{eq:rounded cert} for the polynomial $\err(\mx)$ defined as 
\[
\err(\mx) := \sum_{i=0}^m g_i (\tilde \sigma_i - \sigma_i) + \sum_{j=1}^\ell h_j (\tilde p_j - p_j)
\]
and hence  
\begin{align*}
\|\err\|_1 &\leq \sum_{i=0} \|g_i\|_1 \left(\sum_{k=1}^K \|\tilde s_{i,k}^2 - s_{i,k}^2\|_1 \right) + \sum_{j=0}^\ell \|h_j\|_1 \|\tilde p_j - p_j\|_\infty.
\end{align*}
As shown above, we have $K \in \poly(n)$, $\|\tilde p_j-p_j\|_\infty \leq \eps$, and 
\[
\|\tilde s_{i,k}^2 - s_{i,k}^2\|_1 \leq \eps\left( R^{2t} \lambda_{\min}(M_t(g_i L))^{-1} \poly(n) + \eps\binom{n+2t}{2t}\right).
\]
Using \cref{PROP:localizingmatrixeigs} we moreover have $\lambda_{\min}(M_t(g_i L))^{-1} \leq 2^{\poly(n)}$. Combining these estimates shows $\|\err\|_1 \leq \eps R^{2t} 2^{\poly(n)}$.

The statement $f(\mx) - \lambda + R^{2t}\|\err\|_1 \geq 0$ on $\Sgh$ follows from~\cref{eq:rounded cert} by adding $R^{2t}\|\err\|_1 - \err(\mx)$ to both sides of the equation, using the fact that 
$
    R^{2t}\|\err\|_1 - \err(\mx) \in \mathcal M(\mathbf g)_{2t},
$
which follows from \cref{lem:powers in M} below.
\end{proof}

\begin{lemma} \label{lem:powers in M}
Let $g_1,\ldots,g_m \in \R\cx$ be such that for $R>0$ we have $R^2-x_i^2 \in \mathcal M(\mathbf g)_2$ for each $i \in [n]$. For $t \in \N$ and $\alpha \in \N^n$ with $|\alpha|\leq 2t$, we have
$
    R^{|\alpha|} 
    -x^\alpha \in \mathcal M(\mathbf g)_{2t}.
$
\end{lemma}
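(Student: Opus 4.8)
The plan is to prove the statement by induction on $|\alpha|$, exploiting the multiplicative structure of the quadratic module $\mathcal M(\mathbf g)_{2t}$ under the explicit ball constraint. Throughout, write $M := \mathcal M(\mathbf g)$ for the (truncated) quadratic module generated by $\mathbf g$; recall that $M$ is closed under addition and under multiplication by squares of polynomials (subject to the degree bound being respected). The base cases $|\alpha| = 0$ (trivial, as $1 \in M$) and $|\alpha| \le 2$ are immediate: for $|\alpha| = 1$ we have $\alpha = e_i$ for some $i$, and $R^{|\alpha|} - x^\alpha = R - x_i = \tfrac{1}{2R}\big((R^2 - x_i^2) + (R - x_i)^2\big)$, where $R^2 - x_i^2 \in M$ by hypothesis and $(R-x_i)^2$ is a square; for $|\alpha| = 2$, either $\alpha = 2e_i$ and $R^2 - x_i^2 \in M$ directly, or $\alpha = e_i + e_j$ with $i \neq j$ and we use $2(R^2 - x_ix_j) = (R^2 - x_i^2) + (R^2 - x_j^2) + (x_i - x_j)^2 \in M$, then bootstrap to $R^2 - x_ix_j$ versus $R^{|\alpha|} - x^\alpha = R^2 - x_ix_j$ — wait, here $R^{|\alpha|} = R^2$, so this is exactly what we need.

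For the inductive step, suppose $|\alpha| \ge 2$ and $|\alpha| \le 2t$, and pick an index $i$ with $\alpha_i \ge 1$; write $\alpha = \beta + e_i$ with $|\beta| = |\alpha| - 1 \le 2t - 1$. The key algebraic identity is
\[
R^{|\alpha|} - x^\alpha = R\big(R^{|\beta|} - x^\beta\big) + x^\beta\big(R - x_i\big).
\]
The first term lies in $M$ by the induction hypothesis applied to $\beta$ (and scaling by the positive constant $R$ preserves membership; the degree of $R^{|\beta|} - x^\beta$ is $|\beta| \le 2t$). For the second term $x^\beta(R - x_i)$, I would further split off a factor: if $\beta_j \ge 1$ for some $j$ (necessarily true unless $\beta = 0$, i.e. $|\alpha| = 1$, already handled), write the term and combine it carefully so that what remains is a product of an element of $M$ with a square. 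Concretely, the cleaner route is to reduce everything to the rank-one monomials $x_i$: first establish by induction that $R^{k} - x_i^{k} \in M_{2\lceil k/2\rceil}$ for each single variable $i$ and each $k \le 2t$ (using $R^{k} - x_i^{k} = R(R^{k-1} - x_i^{k-1}) + x_i^{k-1}(R - x_i)$ and noting $x_i^{k-1}(R - x_i)$ needs handling when $k-1$ is odd — here use $x_i^{k-1}(R-x_i) = x_i^{k-2}\cdot x_i(R - x_i)$ and $x_i(R-x_i) = \tfrac12\big((R^2 - x_i^2) - (R - x_i)^2 \cdot 1\big)$... ) — this single-variable reduction is where the parity bookkeeping concentrates.

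The main obstacle I anticipate is precisely this parity/degree bookkeeping: the quadratic module is closed under multiplication by \emph{squares}, not by arbitrary monomials, so each time a factor like $x_i$ (odd degree) or $x^\beta$ (arbitrary degree) multiplies an element of $M$, one must rewrite it as a combination of squares times lower-degree module elements, and verify that no intermediate polynomial exceeds degree $2t$. I would manage this by proving the slightly more flexible auxiliary claim: \emph{for all $\gamma, \delta \in \N^n$ with $|\gamma| + |\delta| \le 2t$, one has $R^{|\gamma|} x^\delta - x^{\gamma + \delta} \in M_{2t}$}, which follows by induction on $|\gamma|$ via $R^{|\gamma|}x^\delta - x^{\gamma+\delta} = R(R^{|\gamma|-1}x^\delta - x^{\gamma - e_i + \delta}) + x^{\gamma - e_i + \delta}(R - x_i)$ — and then absorb the leftover linear factor $(R - x_i)$ using the identity $x^\nu(R - x_i) = \tfrac{1}{2R}\big(x^\nu(R^2 - x_i^2) + x^\nu(R-x_i)^2 - \text{(cross terms already in }M)\big)$ after pairing monomials appropriately, or more simply by first proving $x^\nu(R \pm x_i) \in M$ whenever $|\nu| + 1 \le 2t$ and $\nu$ has even... — ultimately the cleanest packaging is to note that every degree-$\le 2t$ monomial $x^\nu$ satisfies $x^\nu \le R^{|\nu|}$ pointwise on $S(\mathbf g)$ only after we have the statement, so we cannot argue pointwise; we genuinely need the explicit SOS-style identities above, carried out with care about which factors are squares. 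Setting $\gamma = \alpha$, $\delta = 0$ in the auxiliary claim then yields the lemma.
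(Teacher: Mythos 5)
Your base cases and the algebraic identity $R^{|\alpha|}-x^\alpha = R\bigl(R^{|\beta|}-x^\beta\bigr) + x^\beta(R-x_i)$ are correct, but the inductive step as proposed does not close, and the fallback auxiliary claim is actually \emph{false}. The obstruction you identify yourself --- that a quadratic module is closed under multiplication by squares, not arbitrary monomials --- is fatal to this line of attack. Concretely: the term $x^\beta(R-x_i)$ is a monomial $x^\beta$ (generically not a square) times an element of $\mathcal M(\mathbf g)$, and there is no general way to absorb an odd-degree monomial factor. Your single-variable repair is symptomatic: the identity $x_i(R-x_i) = \tfrac12\bigl((R^2-x_i^2) - (R-x_i)^2\bigr)$ has a \emph{minus} sign in front of the square, which is precisely what you cannot afford --- and indeed $x_i(R-x_i)$ is negative at $x_i = -R \in S(\mathbf g)$, so it does not lie in $\mathcal M(\mathbf g)$ at all. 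The auxiliary claim ``$R^{|\gamma|}x^\delta - x^{\gamma+\delta} \in \mathcal M(\mathbf g)_{2t}$'' fails for the same reason: taking $\gamma=\delta=e_j$ gives $Rx_j - x_j^2 = x_j(R-x_j)$, negative on part of the ball, hence outside the module. So setting $\delta=0$ in that claim cannot ``yield the lemma'' because the claim itself is wrong for $\delta \neq 0$.

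The paper avoids this by never peeling off a single degree at a time. It first proves the statement for coordinatewise \emph{even} exponents $R^{2|\alpha|}-x^{2\alpha}$ by induction, using
\[
R^{2|\alpha|} - x^{2\alpha} = R^{2|\alpha|-2}(R^2 - x_j^2) + x_j^2\bigl(R^{2|\alpha|-2} - x^{2(\alpha-e_j)}\bigr),
\]
where the problematic monomial multiplier is now the genuine square $x_j^2$, so membership in $\mathcal M(\mathbf g)_{2|\alpha|}$ follows. It then reaches a general exponent $\gamma$ (with $|\gamma|\le 2t$) by splitting $\gamma=\alpha+\beta$ into two halves of (nearly) equal size and \emph{completing a square}: for $|\gamma|$ odd, with $|\alpha|+1=|\beta|$,
\[
R^{|\gamma|} - x^\gamma = \frac{(Rx^\alpha - x^\beta)^2 + R^2\bigl(R^{2|\alpha|}-x^{2\alpha}\bigr) + \bigl(R^{2|\beta|}-x^{2\beta}\bigr)}{2R},
\]
and for $|\gamma|$ even, with $|\alpha|=|\beta|$, the analogous identity with $(x^\alpha - x^\beta)^2$. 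The square $(Rx^\alpha - x^\beta)^2$ (resp.\ $(x^\alpha-x^\beta)^2$) manufactures the cross term $-2Rx^\gamma$ (resp.\ $-2x^\gamma$) legitimately, while the remaining terms are even-exponent instances already handled, times nonnegative constants. This is the ``care about which factors are squares'' you anticipated; the missing idea is to complete a square across \emph{two balanced halves} of $\gamma$ rather than stripping one coordinate.
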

\begin{proof}
We first consider the case of coordinatewise even exponents, i.e., $R^{2|\alpha|}-x^{2\alpha}$. For this we use induction on~$|\alpha|$. The case $|\alpha|=1$ holds by assumption and for $|\alpha|>1$ we use the identity
\[
R^{2|\alpha|} - x^{2\alpha} = R^{2|\alpha|-2}(R^2 -x_j^2) + x_j^2(R^{2|\alpha|-2} -x^{2(\alpha-e_j)}) \in \mathcal M(\mathbf g)_{2|\alpha|}
\]
where $j \in [n]$ is an index for which $\alpha_j>0$ and $e_j$ is the $j$-th unit vector. Here we use that the first term on the right hand side belongs to $\mathcal M(\mathbf g)_2$ and the second term belongs to $\mathcal M(\mathbf g)_{2|\alpha|}$ as $R^{2|\alpha|-2} -x^{2(\alpha-e_j)} \in \mathcal M(\mathbf g)_{2|\alpha|-2}$ by the induction hypothesis. 

Now let $\gamma \in \N^n$ with $|\gamma|\leq 2t$. We distinguish two cases: $|\gamma|$ is odd or even. When $|\gamma|$ is odd we write $\gamma = \alpha + \beta$ for $\alpha, \beta \in \N^n$ with $|\alpha|,|\beta|\leq t$ and $|\alpha|+1 = |\beta| = \frac{|\gamma|+1}{2}$. We then observe that we have the following identity
\begin{align*}
\frac{(Rx^\alpha - x^\beta)^2 + R^2(R^{2|\alpha|} - x^{2\alpha}) + (R^{2|\beta|} - x^{2\beta})}{2R} \\= \frac{R^{2+2|\alpha|} + R^{2|\beta|} - 2R x^{\alpha+\beta}}{2R} =R^{|\gamma|} - x^{\gamma} 
\end{align*}
where in the last equality we use the identities $R^{|\gamma|} = R^{|\alpha|+|\beta|} = R^{1+2|\alpha|} = R^{2|\beta|-1}$. 
In the first part of the proof we have shown that $R^{2|\alpha|}-x^{2 \alpha}, R^{2|\beta|}-x^{2\beta} \in \mathcal M(\mathbf g)_{2t}$ and since moreover $(Rx^{\alpha}-x^{\beta})^2 \in \mathcal M(\mathbf g)_{2t}$, the above identity thus shows that $R^{|\gamma|}-x^\gamma \in \mathcal M(\mathbf g)_{2t}$.
Finally, for the case where $|\gamma|$ is even we use a similar argument. We write $\gamma = \alpha + \beta$ for $\alpha,\beta \in \N^n$ with $|\alpha|=|\beta|$ and use the identity 
\[
R^{|\gamma|} - x^\gamma = \frac{(x^\alpha-x^\beta)^2 + (R^{2\alpha}-x^{2\alpha}) + (R^{2\beta}-x^{2\beta})}{2}. \qedhere
\]
\end{proof}

\section{Discussion}
We have given algebraic and geometric conditions that guarantee polynomial-time computability of the moment-SOS hierarchy for polynomial optimization problems~\cref{EQ:POP}. In the general, explicitly bounded setting, our conditions are similar to the ones considered by Raghavendra \& Weitz~\cite{RaghavendraWeitz:bitcomplexity} to show existence of compact sum-of-squares certificates. For full-dimensional feasible regions $S(\mg)$, we give explicit, geometric conditions, which include for instance that $S(\mg)$ either contains a small ball, a strictly feasible point of low bit-complexity, or has sufficient volume. Furthermore, we make explicit the connection between polynomial-time computability of the bound $\momt$ and the existence of compact feasible solutions to the sum-of-squares formulation $\sost$, which is only implicitly present in~\cite{RaghavendraWeitz:bitcomplexity}.

\subsection*{A general geometric condition} \cref{THM:FULLDIM} applies only when the feasible region $S(\mg)$ of~\cref{EQ:POP} is a full-dimensional semialgebraic set. It would be very interesting to formulate a similar, geometric condition that guarantees polynomial-time computability of the moment-SOS hierarchy in the general case. This requires finding an appropriate analog of the second condition of~\cref{THM:FULLDIM} in the setting where $S(\mg, \mh)$ might not be full-dimensional.

\subsection*{Relation to the complexity of SDP} Our present discussion relates closely to the more general study of the computational complexity of semidefinite programming. It is an open question whether SDPs can be solved to (near-)optimality in polynomial-time. Even the exact complexity of testing feasibility of SDPs is not known. We do know that in the bit-model, membership of the feasibility problem in NP and Co-NP is simultaneous~\cite{Ramana:sdp}. (In the real number model of Blum-Shub-Smale it lies in ${\text{NP} \cap \text{Co-NP}}$~\cite{Ramana:sdp}.) 

On the positive side, polynomial-time solvability of SDPs is guaranteed when the feasible region contains an `inner ball' and is contained in an `outer ball' of appropriate size~\cite{schrijveretal:ellipsoid, deKlerkVallentin:SDPcomplexity}. On the negative side, there are several classes of relatively simple SDPs whose feasible solutions nonetheless have exponential bit-complexity, see, e.g.,~\cite{Pataki:exponentialsize} and the discussion therein.

In principle, these positive and negative results on SDPs provide conditions on \cref{EQ:introductionSOS} and \cref{EQ:LMOM} that (partly) show when polynomial-time computation can and cannot be guaranteed. 
The key difference with our results is that we only impose conditions on the original polynomial optimization problem~\cref{EQ:POP}, rather than on the semidefinite programs resulting from the moment/sum-of-squares relaxations.

\subsection*{Finding exact SOS-decompositions}

In the setting of polynomial optimization, it usually suffices to find \emph{approximate} SOS-decompositions, for which one can use (standard) SDP-solvers. The problem of finding exact SOS-decompositions is more complicated. In the general case one could in principle use, for example, quantifier-elimination algorithms~\cite{Basu:algorithms,Renegar:quantifier}. In the univariate case specialized algorithms have been developed 
\cite{Schweighofer:univariatealgorithm,Chevillardetal:univariateapproximation}, see also~\cite{Magronetal:univariatesos}. We note however that none of these methods come with polynomial runtime guarantees. 

\subsection*{Acknowledgments}
We thank Harold Nieuwboer for bringing to our attention the literature relating to the volume of neighborhoods of algebraic sets. We further thank the anonymous ISSAC referees for their helpful comments and suggestions. In particular, for their suggestion to also consider the regime $t = O(\log n)$ (see \cref{APP:log}).

\bibliographystyle{plain}
\bibliography{surveybib}

\appendix
\section{A quasipolynomial runtime guarantee when $t = O(\log n)$} \label{APP:log}
Throughout, we have assumed that the level $t$ of the hierarchy is \emph{fixed}. As the SDPs \eqref{EQ:introductionSOS} and \eqref{EQ:LMOM} involve matrices of size $h(n,t) = {n+t \choose t} \lesssim n^t$, this is a natural assumption if one is interested in polynomial-time computation. In the regime $t \approx \log n$, one obtains instead SDPs of \emph{quasi}polynomial size $h(n,t) \approx n^{\poly(\log n)}$, which are still potentially useful. For instance, the SOS-hierarchy has been applied in this regime to obtain algorithms with sub-exponential runtime for certain problems in robust estimation~\cite{KSS:robustestimation}.

Our main results carry over to this setting in a natural way, showing quasipolynomial runtime guarantees under the same assumptions on $S(\mathbf g, \mathbf h)$ and $f$ but with $t = O(\log n)$. For ease of reference, we restate them here in this form.
\begin{proposition}[cf. \cref{PROP:main}] \label{PROP:mainquasi}
Let $S(\mathbf{g}, \mathbf{h})$ be a semialgebraic set and let $t = O(\log n)$ with $t \geq \lceil \deg(f)/2\rceil$. Assume that $S(\mathbf{g}, \mathbf{h})$ is explicitly bounded: $g_1(\mx) = R^2 - \sum_{i=1}^n \mx_i^2$ for some $1 \leq R \leq 2^{\mathrm{poly}(n)}$. Suppose furthermore that there exists an $L \in \R[\mx]_{2t}^*$ with $L(1)=1$ and the following properties:
\begin{enumerate}
    \item For any $g \in \mathbf g$ and any  $p \in \R\cx_{t-\lceil \deg(g)/2\rceil}$, if $L(g p^2) =0$, then there are $p_1, p_2, \ldots, p_\ell\in\R[\mx]$ such that:
\[
	g p^2(\mx) = \sum_{j=1}^\ell p_j(\mx) h_j(\mx),
\]
and $\deg(p_j) \leq 2t-\deg(h_j)$ for each $j \in [\ell]$. We recall that $1 \in \mg$ by convention.
\item The matrices $M_t(L)$ and $M_t(gL)$ ($g \in \mathbf g$) have smallest non-zero eigenvalue at least $2^{-{\mathrm{quasipoly}(n)}}$. 
\end{enumerate}
Then for $\varepsilon \geq 2^{-\mathrm{poly}(n)}$, the bound $\mathrm{mom}(f)_t$ (which equals $\mathrm{sos}(f)_t$) may be computed in quasipolynomial time in $n$ up to an additive error of at most $\varepsilon$.
\end{proposition}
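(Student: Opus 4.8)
The plan is to run the proof of \cref{PROP:main} essentially verbatim, tracking how the size parameters and the resulting estimates change when the level $t$ is allowed to grow like $\log n$ instead of being fixed. First I would rewrite \cref{EQ:LMOM} in the standard primal SDP form \cref{eq:MOMSDP}, exactly as in \cref{sec:SDPformulation}: the feasible variable is the block-diagonal matrix $X = M_t(L) \oplus \bigl(\bigoplus_{i\in[m]} M_t(g_iL)\bigr)$, subject to linear equalities encoding $L(1)=1$ and $L(h_j\mx^\alpha)=0$, with objective $\langle C_f, X\rangle = L(f)$. The only change relative to the fixed-$t$ case is quantitative: this matrix has size $N = h(n,t) + \sum_{i\in[m]} h(n, t-\lceil\deg(g_i)/2\rceil)$ with $h(n,t) = \binom{n+t}{t} \le (2n)^{O(\log n)} = 2^{O((\log n)^2)}$, i.e.\ $N = \mathrm{quasipoly}(n)$; likewise the number $K$ of linear constraints is $\mathrm{quasipoly}(n)$, while the entries of $C_f, A_j, b_j$ are still coefficients of $f, g_i, h_j$ or elements of $\{-1,0,1\}$, hence of bit-complexity $\poly(n)$.

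Next I would apply \cref{prop:L0 standard form} to the given functional $L$. Condition~(1) of \cref{PROP:mainquasi} is precisely the hypothesis required there: a zero eigenvector $v$ of a block $M_t(g_iL)$ yields $g_ip_v^2 = \sum_j p_j h_j$ with $\deg(p_j)\le 2t-\deg(h_j)$, and this forces $v$ to remain a zero eigenvector of the corresponding block of every matrix in the affine subspace $V(\mathcal F)$. Condition~(2) says all non-zero eigenvalues of $X_0 := M_t(L)\oplus\bigl(\bigoplus_i M_t(g_iL)\bigr)$ are at least $r := 2^{-\mathrm{quasipoly}(n)}$, so \cref{prop:L0 standard form} gives $B(X_0, r/2)\subseteq\mathcal F$. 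For the outer ball, \cref{LEM:Lupbound} gives $|L(\mx^\alpha)| \le R^{|\alpha|} \le R^{2t}$ for $|\alpha|\le 2t$; since $R \le 2^{\poly(n)}$ and $t = O(\log n)$ this is $2^{\mathrm{quasipoly}(n)}$, and bounding the Frobenius norm of the block-diagonal matrix by (number of entries)$\times$(largest entry in absolute value, which for the $g_i$-block is at most $\|g_i\|_1 R^{2t}$) yields $\mathcal F \subseteq B(X_0, \bar R)$ with $\bar R = 2^{\mathrm{quasipoly}(n)}$. Since \cref{THM: deKlerkVallentin} also charges the bit-complexity of the center, I would if needed replace $X_0$ by a rational matrix $X_0'$ within Frobenius distance $r/4$ of $X_0$ and of bit-complexity $\mathrm{quasipoly}(n)$, so that $B(X_0', r/4) \subseteq \mathcal F \subseteq B(X_0', \bar R + r/2)$.

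Finally I would invoke \cref{THM: deKlerkVallentin} with inner radius $r/4 \ge 2^{-\mathrm{quasipoly}(n)}$, outer radius $\bar R + r/2 \le 2^{\mathrm{quasipoly}(n)}$, and accuracy $\eps \ge 2^{-\poly(n)}$: its running time is polynomial in $N$, $K$, $\log(\bar R/r)$, $\log(1/\eps)$ and the bit-complexities of $C_f, A_j, b_j$ and $X_0'$ --- quantities that are $\mathrm{quasipoly}(n)$, $\mathrm{quasipoly}(n)$, $\mathrm{quasipoly}(n)$, $\poly(n)$, $\poly(n)$ and $\mathrm{quasipoly}(n)$ respectively. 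As a polynomial in finitely many quasipolynomially-sized quantities is again quasipolynomial in $n$, this produces an additive $\eps$-approximation to $\momt$ in quasipolynomial time, and explicit boundedness of \cref{EQ:POP} gives $\momt = \sost$, which is the claim. I expect no genuine obstacle here beyond careful bookkeeping: one simply checks that every ``$\poly(n)$'' in the proof of \cref{PROP:main} arising from the size $\binom{n+t}{t}$ of the moment and localizing matrices becomes ``$\mathrm{quasipoly}(n)$'' when $t = O(\log n)$, that $R^{2t}$ stays below $2^{\mathrm{quasipoly}(n)}$, and that ``polynomial in quasipolynomial'' collapses back to ``quasipolynomial'' --- no new geometric or algebraic input is needed.
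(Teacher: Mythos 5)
Your proposal is correct and follows exactly the route the paper takes: the paper's own proof is the one-line remark that the arguments of the SDP-reformulation section (via \cref{prop:L0 standard form}, \cref{LEM:Lupbound} and \cref{THM: deKlerkVallentin}) apply analogously, with the only change being that the matrix size $h(n,t)=\binom{n+t}{t}$ and hence all derived bounds become quasipolynomial when $t=O(\log n)$. Your more detailed bookkeeping of these quantities is accurate and adds nothing that deviates from the intended argument.
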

\begin{proof}
    The arguments of \cref{sec:SDPformulation} apply analogously.
\end{proof}
\begin{theorem}[cf. \cref{THM:SUFFICIENT}] \label{THM:SUFFICIENTquasi}
    Let $\Sgh$ be an explicitly bounded semialgebraic set with $R \leq 2^{\poly(n)}$ and let $L \in \R[\mx]_{2t}^*$ be a feasible solution to $\momt$ for $t = O(\log n)$. Assume that $L(\mx^{\alpha}) \in \Q$ has quasipolynomial bit-complexity for all $\alpha \in \N^n_{2t}$. Then the smallest non-zero eigenvalue of $M_t(L)$ is at least $2^{-\mathrm{quasipoly}(n)}$ and the same holds for the localizing matrices $M_t(gL)$ for $g \in \mathbf g$.
\end{theorem}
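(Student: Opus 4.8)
The plan is to follow the proof of \cref{THM:SUFFICIENT} essentially verbatim, i.e.\ to re-run the arguments of \cref{PROP:momentmatrixeigs,PROP:localizingmatrixeigs}, keeping track of how the bounds degrade once $t$ is allowed to grow like $O(\log n)$. The level $t$ enters those proofs in exactly two places: through the size $h(n,t) = \binom{n+t}{t}$ of the (localizing) moment matrices, and through the exponent in the entrywise bound $|L(\mx^\alpha)| \le R^{|\alpha|}$ of \cref{LEM:Lupbound}. For $t = O(\log n)$ one has
\[
    h(n,t) = \binom{n+t}{t} \;\le\; (n+t)^{t} \;\le\; 2^{O((\log n)^2)},
\]
which is quasipolynomial in $n$, and $R^{2t} \le 2^{2t\log_2 R} \le 2^{\poly(n)}$ since $R \le 2^{\poly(n)}$. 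I will use freely that sums and products of quasipolynomial functions of $n$ are again quasipolynomial, and in particular that $\poly(n)$ times a quasipolynomial is quasipolynomial.

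First I would treat the moment matrix $M_t(L)$. By the hypothesis that every $L(\mx^\alpha)$ has quasipolynomial bit-complexity, there is an integer $C$ with $\log_2 C \le \mathrm{quasipoly}(n)$ clearing all denominators, so that $C\cdot M_t(L)$ is integral; by \cref{LEM:Lupbound} its entries have magnitude at most $B := C R^{2t}$, and $\log_2 B = \log_2 C + 2t\log_2 R \le \mathrm{quasipoly}(n)$. With $N := h(n,t) = 2^{O((\log n)^2)}$, \cref{LEM:integermatrix} gives $\lambda_{\min}(M_t(L)) \ge (BN)^{-N}$, and it remains to observe that
\[
    \log_2\!\bigl((BN)^{N}\bigr) \;=\; N\bigl(\log_2 B + \log_2 N\bigr)
\]
is quasipolynomial, being a product of the quasipolynomial $N$ with the quasipolynomial $\log_2 B + \log_2 N$; therefore $\lambda_{\min}(M_t(L)) \ge 2^{-\mathrm{quasipoly}(n)}$.

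Next I would treat a localizing matrix $M_t(gL)$ for $g \in \mg$, where the argument is the same. Writing $g = \sum_{|\gamma|\le d} g_\gamma \mx^\gamma$ with $d = \deg(g)$ constant, there are only $\binom{n+d}{d} = \poly(n)$ nonzero coefficients, each of polynomial bit-complexity, so an integer $C_g$ with $\log_2 C_g \le \poly(n)$ clears their denominators. Each entry of $M_t(gL)$ is the combination $\sum_{|\gamma|\le d} g_\gamma L(\mx^{\alpha+\beta+\gamma})$ of at most $\poly(n)$ entries of $M_t(L)$, so $C_g\, C\, M_t(gL)$ is integral with entries of magnitude at most $B\cdot\poly(n)\cdot C_g \le 2^{\mathrm{quasipoly}(n)}$ and size at most $h(n,t) = 2^{O((\log n)^2)}$; \cref{LEM:integermatrix} again yields $\lambda_{\min}(M_t(gL)) \ge 2^{-\mathrm{quasipoly}(n)}$. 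As there are only $\poly(n)$ constraints, this bound holds for all $g \in \mg$ simultaneously.

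There is no substantial obstacle here — the content is bookkeeping. The one step deserving care is confirming that the factor $(BN)^{-N}$ produced by \cref{LEM:integermatrix} still gives a bound of the form $2^{-\mathrm{quasipoly}(n)}$: this works precisely because raising a quantity of magnitude $2^{\mathrm{quasipoly}(n)}$ to a quasipolynomial power stays of that form (equivalently, because the product of two quasipolynomials is quasipolynomial), and it is exactly this point that would break if the matrix size $N = h(n,t)$ were exponential in $n$, which is why the statement is restricted to $t = O(\log n)$.
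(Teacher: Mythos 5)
Your proposal is correct and follows exactly the route the paper takes: the paper's own proof simply states that Propositions~\ref{PROP:momentmatrixeigs} and~\ref{PROP:localizingmatrixeigs} generalize straightforwardly, with the only change being that $h(n,t)$ becomes quasipolynomial rather than polynomial so that \eqref{EQ:momentmatrixeigs} yields $2^{-\mathrm{quasipoly}(n)}$. Your write-up is just a more detailed execution of that same bookkeeping, correctly identifying that the critical point is the exponent $N = h(n,t)$ in Lemma~\ref{LEM:integermatrix}.
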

\begin{proof}
    \cref{PROP:momentmatrixeigs} and \cref{PROP:localizingmatrixeigs}, which make up the proof of \cref{THM:SUFFICIENT}, generalize to the setting $t = O(\log n)$ straightforwardly. The key difference is that, if $t = O(\log n)$, the number of monomials $h(n, t)$ of degree at most $t$ is larger than $\mathrm{poly}(n)$ (but still quasipolynomial in $n$). As a result, the inequality~\eqref{EQ:momentmatrixeigs} only yields a bound $\lambda_{\min}(M_t(L)) \geq 2^{-\mathrm{quasipoly}(n)}$.
\end{proof}

\begin{theorem}[cf. \cref{THM:FULLDIM}] \label{THM:FULLDIMquasi}
Let $S(\mathbf{g}) \subseteq \R^n$ be a semialgebraic set defined only by inequalities. Assume that the following two conditions are satisfied:
\begin{enumerate}
    \item $S(\mathbf{g})$ is explicitly bounded: $g_1(\mx) = R^2 - \sum_{i=1}^n \mx_i^2$ with constant $1 \leq R \leq 2^{\mathrm{poly}(n)}$.
    \item $S(\mathbf{g})$ contains a ball of radius $r \geq 2^{-\mathrm{poly}(n)}$, i.e., $B(z, r) \subseteq S(\mathbf{g})$ for some $z \in \R^n$.
\end{enumerate}
Then, for $t = O(\log n)$ with $t \geq \lceil \deg(f)/2\rceil$ and $\varepsilon \geq 2^{-\mathrm{poly}(n)}$, the bound $\mathrm{sos}(f)_t$ may be computed in quasipolynomial time in~$n$ up to an additive error of at most $\varepsilon$. 
\end{theorem}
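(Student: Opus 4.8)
The plan is to run the proof of \cref{THM:FULLDIM} essentially verbatim, substituting the quasipolynomial versions \cref{PROP:mainquasi} and \cref{THM:SUFFICIENTquasi} for \cref{PROP:main} and \cref{THM:SUFFICIENT}. The only point that needs care is whether the bit-complexity estimates from the ``$t$ fixed'' case survive when $t = O(\log n)$, and identifying where the unavoidable quasipolynomial (rather than polynomial) loss creeps in.

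First I would carry out, exactly as in the proof of \cref{THM:FULLDIM}, the reduction from the hypothesis $B(z,r) \subseteq S(\mg)$ with $r \geq 2^{-\poly(n)}$ to the existence of a translated hypercube $B := [-r,r]^n + z \subseteq S(\mg)$ with a (slightly smaller) $r \geq 2^{-\poly(n)}$, where $z \in \Q^n$ has polynomial bit-complexity and $\|z\|_2 \leq R \leq 2^{\poly(n)}$. Let $\mu_z$ be the uniform (normalized Lebesgue) probability measure on $B$ and let $L := L_{\mu_z} \in \R\cx_{2t}^*$ be the associated functional via \cref{EQ:measureoperator}; it is a feasible solution of \cref{EQ:LMOM} at level $t$. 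Condition~(1) of \cref{PROP:mainquasi} then holds trivially: $B$ is full-dimensional, so $L(g p^2) = \int_{S(\mg)} g p^2 \, d\mu_z = 0$ forces $g p^2 \equiv 0$, and there are no equality constraints.

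It remains to verify condition~(2) of \cref{PROP:mainquasi}, for which I would invoke \cref{THM:SUFFICIENTquasi}; this amounts to showing that $L(\mx^\alpha)$ has quasipolynomial bit-complexity for all $|\alpha| \leq 2t$. Using $L_{\mu_z}(\mx^\alpha) = L_{\mu_0}((\mx - z)^\alpha) = \sum_{\gamma \leq \alpha} c_\gamma L_{\mu_0}(\mx^\gamma)$ together with the closed form $L_{\mu_0}(\mx^\gamma) = \prod_{i=1}^n r^{\gamma_i}/(\gamma_i + 1)$ (which vanishes unless $\gamma \in (2\N)^n$), one sees that each $L_{\mu_0}(\mx^\gamma)$ has bit-complexity $O(|\gamma| \cdot \poly(n)) = O(t \cdot \poly(n))$, which is $\poly(n)$ since $t = O(\log n)$; the coefficients $c_\gamma$ of $(\mx - z)^\alpha$ also have bit-complexity $\poly(n)$, and the sum has at most $h(n,t) = \binom{n+t}{t} \leq n^{O(\log n)}$ terms. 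Hence $L(\mx^\alpha)$ has $\poly(n)$ bit-complexity — in particular quasipolynomial, as \cref{THM:SUFFICIENTquasi} requires — and that theorem then yields $\lambda_{\min}(M_t(L)) \geq 2^{-\mathrm{quasipoly}(n)}$ and $\lambda_{\min}(M_t(g L)) \geq 2^{-\mathrm{quasipoly}(n)}$ for every $g \in \mg$. With both conditions of \cref{PROP:mainquasi} in hand, that proposition produces an $\eps$-additive approximation to $\mathrm{mom}(f)_t = \mathrm{sos}(f)_t$ in quasipolynomial time for any $\eps \geq 2^{-\poly(n)}$, which is the assertion.

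There is no serious obstacle here beyond bookkeeping; the one place where a genuinely quasipolynomial loss (as opposed to a polynomial one) is forced is the matrix dimension $h(n,t) \approx n^{\log n}$. It enters the bound of \cref{LEM:integermatrix} used inside \cref{THM:SUFFICIENTquasi} as an exponent, degrading the smallest non-zero eigenvalue from $2^{-\poly(n)}$ to $2^{-\mathrm{quasipoly}(n)}$, and this $\log(1/\lambda_{\min}) = \mathrm{quasipoly}(n)$ then appears (via $\log(R/r)$) in the runtime guarantee of \cref{THM: deKlerkVallentin} underlying \cref{PROP:mainquasi}.
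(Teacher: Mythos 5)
Your proposal is correct and follows essentially the same route as the paper, which likewise constructs $L_{\mu_z}$ from the hypercube $[-r,r]^n+z\subseteq S(\mathbf g)$, notes via \cref{EQ:mu0bc} and \cref{EQ:muzbc} that its moments have small bit-complexity, and then chains \cref{THM:SUFFICIENTquasi} into \cref{PROP:mainquasi}. Your identification of $h(n,t)\approx n^{O(\log n)}$ in the exponent of \cref{LEM:integermatrix} as the sole source of the quasipolynomial degradation also matches the paper's remarks.
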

\begin{proof}
    The arguments of \cref{SEC:GEOM} generalize directly to the setting where $t = O(\log n)$, see in particular \cref{EQ:mu0bc} and \cref{EQ:muzbc}. This yields a feasible solution $L \in \R[\mx]_{2t}$ to~\cref{EQ:LMOM} for which $L(\mx^\alpha)$ has quasipolynomial bitcomplexity in $n$ for all $|\alpha| \leq 2t$. Therefore, we can use \cref{THM:SUFFICIENTquasi} to bound the smallest (non-zero) eigenvalues of the (localizing) moment matrices of $L$. Finally, we may apply \cref{PROP:mainquasi} to finish the proof.
\end{proof}

\clearpage 

\end{document}